\let\oldmarginpar\marginpar
\renewcommand\marginpar[1]{\oldmarginpar[\raggedleft\footnotesize #1]%
{\raggedright\footnotesize #1}}
 \renewcommand*{\backref}[1]{}
 \renewcommand*{\backrefalt}[4]{
   \ifcase #1 %
    [No citations.]%
   \or
    [#2]%
   \else
    [#2]%
   \fi
 }
\newcommand{\RR}{{\mathbb{R}}}
\newcommand{\ZZ}{{\mathbb{Z}}}
\renewcommand{\setminus}{{\smallsetminus}}
\newcommand{\bdy}{\partial}
\newcommand{\area}{{\mathrm{area}}}
\newcommand{\abs}[1]{{\left\vert #1 \right\vert}} 
\def\co{\colon\thinspace}
\newcommand{\lmin}{{\ell_{\min}}}
\newcommand{\kmax}{{\kappa_{\max}}}
\newcommand{\tlim}{{t_{\lim}}}
\theoremstyle{plain}
\newtheorem{theorem}{Theorem}[section]
\newtheorem{lemma}[theorem]{Lemma}
\newtheorem{claim}[theorem]{Claim}
\newtheorem*{namedtheorem}{\theoremname}
\newcommand{\theoremname}{testing}
\theoremstyle{definition}
\newtheorem{define}[theorem]{Definition}
\numberwithin{equation}{section}
\begin{document}

\title{Explicit Dehn filling and Heegaard splittings}
\author{David Futer}

\address[]{Department of Mathematics, Temple University, Philadelphia,
	PA 19122, USA}

\email[]{dfuter@temple.edu}
 \thanks{Futer is  is supported in part by NSF grant DMS--1007221. }

\author{Jessica S. Purcell}

\address[]{ Department of Mathematics, Brigham Young University,
Provo, UT 84602, USA}

\email[]{jpurcell@math.byu.edu }
\thanks{Purcell is supported in part by NSF grant 
 DMS--1007437 and the Alfred P. Sloan Foundation.}

\thanks{ \today}

\begin{abstract}
We prove an explicit, quantitative criterion that ensures the Heegaard
surfaces in Dehn fillings behave ``as expected.'' Given a cusped
hyperbolic $3$--manifold $X$, and a Dehn filling whose meridian and
longitude curves are longer than $2\pi(2g-1)$, we show that every
genus $g$ Heegaard splitting of the filled manifold
is isotopic to
a splitting of the original manifold $X$. The analogous statement holds
for fillings of multiple boundary tori.  This gives an effective
version of a theorem of Moriah--Rubinstein and Rieck--Sedgwick.
\end{abstract}

\maketitle

\section{Introduction}

In 1997, Moriah and Rubinstein investigated the relationship between
Heegaard splittings of a cusped hyperbolic $3$--manifold and the
Heegaard splittings of its Dehn fillings \cite{moriah-rubinstein}. 
They showed that if one imposes a bound on the genus of the surfaces 
and excludes finitely many Dehn filling slopes, 
then every irreducible Heegaard
surface in the filled manifold is isotopic to one of a finite
collection of surfaces in the original manifold.  In 2001, Rieck and
Sedgwick used topological ideas to show that any Dehn filling slope
that results in a smaller genus Heegaard surface must lie on one of a
finite number of so-called ``bad'' slopes and ``destabilization
lines'' in Dehn surgery space \cite{rieck-sedgwick}.  Rieck showed
that the number of bad slopes is bounded by a quadratic function of
the genus \cite{rieck}, while Rieck and Sedgwick showed that this
number is finite in general, independent of genus
\cite{rieck-sedgwick2}.

Due to this previous work, we know that if we exclude a finite number
of Dehn filling slopes, and a finite number of destabilization lines
in the Dehn surgery space, any bounded--genus Heegaard surface in a
Dehn filling will be a Heegaard surface in the original manifold.
However, there has not been an effective characterization of
\emph{which} slopes and destabilization lines must be excluded.  As a
consequence, it has been difficult to use these results to prove
explicit bounds, such as those needed in a recent paper of the authors
with Cooper \cite{cooper-futer-purcell}.  

In this paper, we make these constraints explicit.  In particular, we
show the following.

\begin{theorem}\label{thm:main}
Let $X$ be a cusped, orientable hyperbolic 3--manifold. Choose
disjoint horospherical neighborhoods $C_1, \ldots, C_k$ about some
subset of the cusps, and let $s_i$ be a Dehn filling slope on each
torus $\bdy C_i$. Let $\Sigma$ be a Heegaard surface of genus $g \geq
1$ for the Dehn filled manifold $M = X(s_1, \ldots, s_k)$.  Then
we have the following.
\begin{enumerate}
\item\label{i:core-in-surf} If the length $\ell(s_i)$ satisfies
  $\ell(s_i) > 2\pi(2g-1)$ for every $i$, then each core curve
  $\gamma_i$ for the filling solid torus is isotopic into $\Sigma$
  (although these cores may not be simultaneously isotopic into
  $\Sigma$).
\item\label{i:surf-from-unfilled} If, in addition, the shortest
  longitude $\lambda_i$ for each $s_i$ satisfies $\ell(\lambda_i) >
  6(2g-3)$, the surface $\Sigma$ can be isotoped into $M \setminus
  (\gamma_1 \cup \ldots \cup \gamma_k) \cong X$, and forms a Heegaard
  surface for $X$.
\end{enumerate}
\end{theorem}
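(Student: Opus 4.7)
My plan is to combine a geometric realization of the Heegaard sweepout with Margulis-tube estimates for the filling solid tori. Since each $\ell(s_i) > 2\pi$, the Gromov--Thurston $2\pi$-theorem equips $M$ with a complete Riemannian metric of sectional curvature $\leq -1$ that agrees with the original hyperbolic metric on $X \setminus \bigcup C_i$ and has an explicit negatively curved warped-product form on each filling solid torus $V_i$. In this metric each core $\gamma_i$ is a closed geodesic whose embedded tubular neighborhood has radius growing with $\ell(s_i)$. I would then invoke a sweepout or min-max theorem of Pitts--Rubinstein type to replace $\Sigma$ by a representative $\Sigma'$, isotopic to $\Sigma$ after controlled handle slides, for which Gauss--Bonnet in curvature $\leq -1$ gives the area bound $\area(\Sigma') \leq 2\pi(2g-2)$.

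For part~(\ref{i:core-in-surf}), I would fix a filling torus $V_i$ and study $F_i := \Sigma' \cap V_i$, a compact surface properly embedded in $V_i$ with $\bdy F_i \subset \bdy V_i$. Using the cross-sectional geometry of $V_i$, whose horizontal circumferences are approximately $\ell(s_i)$, each component of $F_i$ that wraps nontrivially around $\gamma_i$ or carries essential meridional boundary must contribute area that scales linearly in $\ell(s_i)$. The area budget $2\pi(2g-2)$ combined with the hypothesis $\ell(s_i) > 2\pi(2g-1)$ then severely restricts the topology of $F_i$; after standard disk and annulus surgery inside $V_i$ one extracts an annulus with one boundary on $\gamma_i$ and the other on $\Sigma'$. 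This annulus is precisely the desired isotopy of $\gamma_i$ into $\Sigma$.

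For part~(\ref{i:surf-from-unfilled}), I need to realize these isotopies simultaneously. Individually, each $\gamma_i$ can be pushed off $\Sigma$ once it is isotopic in, but modifications near $V_i$ may in principle push $\Sigma$ across some other $V_j$. The stronger bound $\ell(\lambda_i) > 6(2g-3)$ should control the geometry of meridian disks inside $V_i$ and confine any annular intersection of $\Sigma$ with $V_i$ to $V_i$ alone, so the annuli produced in the previous step can be chosen mutually disjoint; isotoping $\Sigma$ along them carries it into $X = M \setminus (\gamma_1 \cup \cdots \cup \gamma_k)$. Standard Heegaard-theoretic arguments in the spirit of Moriah--Rubinstein then identify the resulting surface as a Heegaard surface for $X$.

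The main obstacle, as the theorem itself flags, is this passage from local to global: under only the part~(\ref{i:core-in-surf}) hypothesis the cores need not be simultaneously isotopic into $\Sigma$, so the longitude bound must do genuine work to decouple the isotopies in distinct tubes. Quantifying how $\ell(\lambda_i)$ propagates through the $\bdy$-compressions and handle slides needed inside $V_i$---so that nothing leaks into a neighboring tube---is where the effective bookkeeping will be most delicate.
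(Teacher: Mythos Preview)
Your outline for part~(\ref{i:core-in-surf}) is in the right spirit---a Pitts--Rubinstein area bound on a sweepout surface, followed by a projection argument inside each solid torus---but you are glossing over a genuine obstacle: the min-max machinery only produces a bounded-area representative when the Heegaard surface is \emph{strongly irreducible}. The paper handles this by first untelescoping $\Sigma$ to a thin generalized Heegaard splitting in the sense of Scharlemann--Thompson, running the Breslin-style argument on the strongly irreducible odd surfaces, and then amalgamating back to recover $\Sigma$. Your phrase ``isotopic to $\Sigma$ after controlled handle slides'' does not capture this, and without it you have no area bound to work with. (Also, the $2\pi$-theorem metric used here has sectional curvature bounded above by a constant strictly between $-1$ and $0$, not by $-1$; the area bound and the disk area have to be played off against one another via the explicit estimates of Theorem~\ref{thm:negcurv-metric} and Lemma~\ref{lemma:area-D}.)

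For part~(\ref{i:surf-from-unfilled}) you have misidentified the obstruction. The difficulty is \emph{not} that the isotopies at different tubes might interfere; the paper simply proceeds by induction on $k$, drilling one core at a time. The real issue, already present for a single core $\gamma_k$, is that knowing $\gamma_k$ is isotopic into $\Sigma$ does \emph{not} imply that $\Sigma$ is a Heegaard surface for $M \setminus \gamma_k$: for that, $\gamma_k$ must be a core of one of the compression bodies. Concretely, after isotoping $\gamma_k$ into a strongly irreducible odd surface $S$ of the generalized splitting, one cuts $S$ along $\gamma_k$ and compresses maximally into one side; the claim is that no essential piece survives. If an essential piece $R$ did survive, its boundary curves on $\partial N(\gamma_k)$ would be \emph{longitudes} of $s_k$ (because we cut $S$ along $\gamma_k$), and $R$ would puncture to an essential surface $R^\circ \subset X$ whose boundary has total length exceeding $b \cdot 6(2g-3)$. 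The Agol--Lackenby bound $\ell(\partial R^\circ) \le 6\,|\chi(R^\circ)|$ then yields a contradiction. This is precisely where the hypothesis $\ell(\lambda_i) > 6(2g-3)$ enters, and it has nothing to do with decoupling neighboring tubes or controlling meridian disks.
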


A \emph{longitude for $s_i$} in item \eqref{i:surf-from-unfilled} is
defined to be a slope $\lambda_i$ on $\bdy C_i$ that intersects $s_i$
once.  The \emph{shortest longitude} is a longitude whose length is
smallest among all longitudes for $s_i$.  Here the lengths $s_i$ and
$\lambda_i$ are the lengths of geodesic representatives on the
horospherical torus $\partial C_i$,
in the metric induced by the hyperbolic metric on $X$.

The slopes on $\bdy C_i$ that fail condition \eqref{i:core-in-surf} of
the theorem, i.e.\ those slopes whose core of the Dehn filling solid
torus is not isotopic into $\Sigma$, are exactly the ``bad'' slopes
studied by Rieck and Sedgwick \cite{rieck, rieck-sedgwick2,
  rieck-sedgwick}. Thus part \eqref{i:core-in-surf} of Theorem
\ref{thm:main} gives an explicit finite list of candidates for bad
slopes. Similarly, for each slope $\lambda_i$ of length less than
$6(2g-3)$, the primitive integer coordinates of the meridians that
intersect $\lambda_i$ once lie on a single line in $\RR^2$; this line
is exactly the ``destabilization line'' corresponding to
$\lambda_i$. Thus part \eqref{i:surf-from-unfilled} of Theorem
\ref{thm:main} gives an explicit finite list of candidates for
destabilization lines.

We note that because $g \geq 1$, the hypotheses of the theorem always
require that the Dehn filling slopes satisfy $\ell(s_i) > 2\pi$. As a
result, the $2\pi$--Theorem of Gromov and Thurston
\cite{bleiler-hodgson} implies that the manifold $M=X(s_1, \ldots,
s_k)$ has a negatively curved metric. (Thus, in fact, the case $g=1$
is vacuous.)

Our main tool in proving Theorem \ref{thm:main} is geometry: we obtain
our conclusions from area considerations in the negatively curved
metric on $M$.  This geometric viewpoint follows the lead of Moriah
and Rubinstein's paper \cite{moriah-rubinstein}.  We also follow their
lead in using a theorem of Pitts and Rubinstein
\cite{pitts-rubinstein:min2, rubinstein:survey} that relates Heegaard
surfaces to minimal surfaces in $M$ (see Lemma \ref{lemma:sweepout}
below).
The Pitts--Rubinstein result has been used in a number of applications
(e.g.\ \cite{breslin, lackenby:Heegaard, maher:virtual-fibers,
  moriah-rubinstein}),
but unfortunately a complete proof of this result does not appear in
the literature.
De Lellis and Pellandini have proved an important step, namely that
the ``minimax method'' produces a minimal surface of the appropriate
genus \cite{delellis-pellandini}.  In addition, a survey paper by
Souto \cite{souto:heegaard-survey} contains a discussion of the status
of the proof, including what remains to be done after the work of De
Lellis and Pellandini. Souto claims in \cite{souto:heegaard-survey} to have worked out the remainder
of the proof, although his argument has not yet appeared. In this
work, we shall assume the Pitts--Rubinstein result, in anticipation of
a full proof.

In addition to methods used by Moriah and Rubinstein, our argument
takes advantage of several other tools, some of which were unavailable
in 1997.

First, we will use an effective version of the $2\pi$--Theorem, due to
the authors and Kalfagianni \cite[Theorem 2.1]{fkp:jdg}, to get
explicit estimates on curvature and area in $M$. This result,
described in Section \ref{sec:Dehn}, will give us much more effective
control over surfaces.

Second, we will use the notion of \emph{generalized Heegaard
  splittings}, developed by Scharlemann and Thompson
\cite{scharlemann-thompson}, to reduce the crux of the argument to the
case where $\Sigma$ is strongly irreducible. We review the relevant
ideas in Section \ref{sec:sweepouts}.

Third, our proof relies on the argument in a recent paper by Breslin
\cite{breslin}.  In fact, we obtain a generalization of his theorem,
which is likely to be of independent interest.  To simplify the
statement of our generalization, we use the following definition.

\begin{define}\label{def:submersible}
Let $V$ be a solid torus, with a prescribed Riemannian metric. We say
the metric on $V$ is \emph{submersible} if, after lifting the metric
to the universal cover $\widetilde{V} \cong D \times \RR$, there is a
Riemannian submersion from $\widetilde{V}$ to its cross-sectional disk
$D$. Recall that a smooth map $f: \widetilde{V} \to D$ is called a
Riemannian submersion if its differential $df\co T_p \widetilde{V} \to
T_{f(p)} D$ is an orthogonal projection at each point.
\end{define}

We note that if $M$ has a hyperbolic metric and $V \subset M$ is a
fixed-radius tube about a closed geodesic, then the metric on $V$ is
submersible. In addition, we will see in Section \ref{sec:Dehn} that
the negatively curved solid tori constructed using the $2\pi$--Theorem
are also submersible. Thus the hypothesis of submersibility is
relatively mild.

\begin{theorem}\label{thm:intro-breslin}
  Let $M$ be an orientable Riemannian $3$--manifold whose non-compact
  ends (if any) are isometric to horospherical cusp neighborhoods, and
  whose boundary (if any) consists of minimal surfaces.  Suppose the
  sectional curvatures of $M$ are bounded above by $\kmax < 0$. Let
  $\Sigma$ be a Heegaard surface for $M$. Let $V$ be a solid torus in
  $M$, such that the metric on $V$ is submersible and its
  cross--sectional disk $D$ satisfies
  \begin{equation}\label{eq:needed-area}
    \area(D) \: > \: 2\pi \chi(\Sigma) / \kmax.
  \end{equation}
  Then the core curve $\gamma$ of $V$ is isotopic into $\Sigma$. Here
  $\chi(\Sigma)$ denotes Euler characteristic.
\end{theorem}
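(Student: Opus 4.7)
The plan is to combine three ingredients: Scharlemann--Thompson thin position, the Pitts--Rubinstein minimax method, and an area/submersion argument generalizing Breslin \cite{breslin}. The overall strategy is to replace $\Sigma$ by an isotopic minimal surface whose total area is controlled by Gauss--Bonnet, then to use submersibility of $V$ to show that, if $\gamma$ failed to be isotopic into $\Sigma$, the intersection $\Sigma\cap V$ alone would already exceed that area.

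First, I would reduce to the case that $\Sigma$ is strongly irreducible. Applying the Scharlemann--Thompson machinery \cite{scharlemann-thompson} to $(M,\Sigma)$ produces a generalized Heegaard splitting whose thick levels $F$ are strongly irreducible and each satisfy $|\chi(F)|\leq|\chi(\Sigma)|$. Since the thinning and amalgamation moves are realized by ambient isotopies and compressions in $M$, an isotopy of $\gamma$ into any thick level can be promoted back to an isotopy of $\gamma$ into $\Sigma$. Thus it suffices to prove the theorem when $\Sigma$ itself is strongly irreducible.

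Next I would invoke Pitts--Rubinstein (Lemma \ref{lemma:sweepout}) to replace $\Sigma$ by a smoothly embedded minimal surface $\Sigma'$ isotopic to $\Sigma$ (with the standard caveat of possibly pinching to a one-sided surface). Because the sectional curvatures of $M$ are bounded above by $\kmax<0$ and the second fundamental form of a minimal surface has non-positive determinant, the Gauss equation yields $K_{\Sigma'}\leq\kmax$ pointwise. Integrating over $\Sigma'$ and applying Gauss--Bonnet gives
\[
\area(\Sigma')\;\leq\;\frac{2\pi\,\chi(\Sigma')}{\kmax}\;\leq\;\frac{2\pi\,\chi(\Sigma)}{\kmax},
\]
where the second inequality uses $|\chi(\Sigma')|\leq|\chi(\Sigma)|$ together with $\kmax<0$.

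The crux is the third step. Suppose for contradiction that $\gamma$ is not isotopic into $\Sigma'$. Let $p\co\widetilde V\to V$ be the universal covering, with fundamental domain $D\times[0,1]$, and let $f\co\widetilde V\to D$ be the Riemannian submersion supplied by Definition \ref{def:submersible}. The key topological claim, generalizing Breslin, is that the lift $p^{-1}(\Sigma'\cap V)\cap(D\times[0,1])$ projects via $f$ onto $D$ with absolute degree at least one; this is where strong irreducibility of $\Sigma$ and the failure of $\gamma$ to be isotopic into $\Sigma'$ are used, via an analysis of $\Sigma'$ against a sweepout of $V$ by meridional disks. Granting this, submersibility makes $f$ area non-increasing on $2$-planes (since $df$ is an orthogonal projection at each point), so
\[
\area(\Sigma'\cap V)\;\geq\;\area(D).
\]
Combined with $\area(\Sigma'\cap V)\leq\area(\Sigma')$ and the Gauss--Bonnet bound, this contradicts hypothesis \eqref{eq:needed-area}. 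The main obstacle I anticipate is precisely the degree-one statement: porting Breslin's hyperbolic, fixed-radius-tube combinatorics to the abstract submersible setting, and handling the case when the Pitts--Rubinstein output is a pinched or one-sided variant of $\Sigma$ rather than an embedded isotopic copy.
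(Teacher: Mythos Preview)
Your first two steps are broadly aligned with the paper, but the third step conceals a genuine gap, and fixing it forces you back to the paper's route rather than the area--contradiction you outline.

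The paper does \emph{not} use Pitts--Rubinstein to produce a single minimal surface isotopic to $\Sigma$; as you note yourself, the minimax output may be one-sided or of lower genus. What the paper extracts instead (Lemma~\ref{lemma:sweepout}) is a \emph{sweepout} $\{S_t\}_{t\in(-1,1)}$ by surfaces isotopic to the strongly irreducible Heegaard surface, each of area at most $2\pi\chi(\Sigma)/(\zeta\kmax)$. This distinction matters, because the sweepout parameter $t$ is used in an essential connectedness argument that your proposal omits.

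Your ``degree-one'' claim is not how Breslin's argument runs, and I do not see how to prove it. The logic of the paper is in the \emph{opposite} direction: the area bound on $S_t$ together with submersibility implies (Lemma~\ref{lemma:gamma-miss-sfce}) that the projection of $S_t\cap V$ to $D$ \emph{misses} a point, so $V\setminus S_t$ contains an essential curve of $V$. This says nothing about $\gamma$ being isotopic into $S_t$; for instance $S_t$ could meet $V$ only in boundary-parallel annuli or be disjoint from $V$ entirely. To get further one needs the whole family: for $t$ near $-1$ the essential curve lies on one side of $S_t$, for $t$ near $+1$ on the other, and openness plus connectedness of $(-1,1)$ produces a value of $t$ with essential curves on both sides, hence an essential curve in $S_t\cap V$ (Lemma~\ref{lemma:breslin-lemma2}). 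Even then the conclusion is not immediate: one still needs the Rubinstein--Scharlemann graphic to upgrade this to an embedded annulus from $\bdy N(\gamma)$ to $S$ (Lemma~\ref{lemma:breslin-lemma1}) and a Schultens-type thin-position argument to force that annulus to wrap once around $\gamma$ (Lemma~\ref{lemma:breslin-lemma5}). None of these steps is an area comparison of the form $\area(\Sigma'\cap V)\geq\area(D)$, and your contrapositive formulation does not obviously hold: a Heegaard surface into which $\gamma$ fails to be isotopic can still meet $V$ in a set of arbitrarily small area.
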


Theorem \ref{thm:intro-breslin} generalizes Breslin's theorem in
several ways. First, it does not require $\Sigma$ to be strongly
irreducible.  Second, it allows $M$ to have cusps and/or
boundary. Third, it allows the metric on $M$ to have variable
curvature. Finally, the explicit hypothesis of equation
\eqref{eq:needed-area} may be easier to check in applications than
Breslin's hypothesis on the length of $\gamma$.  These improvements
are obtained by modifications of Breslin's original argument; we
describe them in Section \ref{sec:breslin}.

With all of this work in hand, part \eqref{i:core-in-surf} of Theorem
\ref{thm:main} will follow immediately by applying Theorem
\ref{thm:intro-breslin} to the negatively curved metric described in
Theorem \ref{thm:negcurv-metric}. Part \eqref{i:surf-from-unfilled} of
Theorem \ref{thm:main} will follow by another geometric argument,
which is given in Section \ref{sec:assembly}.

\subsection*{Acknowledgments}
We thank Marc Lackenby, Yo'av Rieck, Saul Schleimer, and Juan Souto for a number of
helpful conversations.  We also thank the referee for numerous  suggestions that improved our exposition.

\section{A negatively curved metric on the filled manifold}\label{sec:Dehn}

In this section, we begin with a cusped hyperbolic manifold $X$, and
recall an explicit construction of a negatively curved metric on a
Dehn filling $M=X(s_1, \ldots, s_k)$.  Following results of the the
authors and Kalfagianni in \cite[Section 2]{fkp:jdg}, we will obtain
explicit estimates on curvature and areas in the negatively curved
metric on $M$.  These estimates are designed to plug
into equation
\eqref{eq:needed-area}, which will
give us control over Heegaard surfaces in
$M$.

\begin{theorem}\label{thm:negcurv-metric}
  Let $X$ be a complete, finite--volume hyperbolic manifold with
  cusps.  Suppose $C_1, \dots, C_k$ are disjoint horoball
  neighborhoods of some (possibly all) of the cusps.  Let $s_1, \dots,
  s_k$ be slopes on $\partial C_1, \dots, \partial C_k$, each with
  length greater than $2\pi$.  Denote the minimal slope length by
  $\lmin$.
  
  Then, for every $\zeta \in (0,1)$, the Dehn filled manifold $M =
  X(s_1, \ldots, s_k)$ admits a Riemannian metric, in which the cusp
  $C_i$ is replaced by a negatively curved solid torus $V_i$. This
  metric has the following properties:
  \begin{enumerate}
  \item\label{item:hyp} The metric on $M \setminus \bigcup_i V_i$
    agrees with the hyperbolic metric on $X \setminus \bigcup_i C_i$.
  \item\label{item:sec} The sectional curvatures of $M$ are bounded
    above by $\zeta\left( \left(\frac{2\pi}{\lmin}\right)^2 - 1\right)
    < 0$.
  \item\label{item:sym} The metric on each $V_i$ is submersible, as in
    Definition \ref{def:submersible}.
  \item\label{item:merid} The cross-sectional disk of $V_i$ has area
    at least $\zeta \left(\dfrac{\lmin^2}{\lmin + 2\pi}\right)$.
  \end{enumerate}
\end{theorem}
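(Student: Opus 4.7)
The plan is to invoke the explicit warped--product construction underlying \cite[Theorem~2.1]{fkp:jdg}, which already delivers properties \eqref{item:hyp}, \eqref{item:sec}, and (up to a normalization check) \eqref{item:merid}, and then to read off the submersibility property \eqref{item:sym} directly from the structure of that metric.

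First, I would recall the geometric model. For each cusp $C_i$, choose Euclidean coordinates on $\partial C_i$ so that the slope $s_i$ aligns with the first coordinate direction; denote this coordinate by $\mu$ and the orthogonal direction by $\lambda$. Replace $C_i$ with a solid torus $V_i$ modeled on $[0,R_i] \times T^2_i$, carrying a warped--product metric of the form
\[
g_{V_i} \: = \: dr^2 \,+\, f(r)^2 \, d\mu^2 \,+\, h(r)^2 \, d\lambda^2,
\]
where $r$ is the distance from the core. Choose $f$, $h$, and $R_i$ as in \cite{fkp:jdg} so that $f(0)=0$ with $f'(0)=1$ (smoothness at the core), and so that the metric is smoothly tangent at $r=R_i$ to the hyperbolic metric on $\partial C_i$ (smoothness across the seam). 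The sectional curvatures of such a warped product are given by $-f''/f$, $-h''/h$, and $-f'h'/(fh)$, and the choice of $f$, $h$ in \cite{fkp:jdg} arranges each of them to be at most $\zeta\!\left(\left(2\pi/\lmin\right)^2 - 1\right)$. This gives property \eqref{item:hyp} by construction, and property \eqref{item:sec} from these curvature formulas.

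Second, I would verify submersibility, property \eqref{item:sym}. The universal cover $\widetilde{V}_i$ is diffeomorphic to $[0,R_i] \times \RR^2$ equipped with the lifted warped--product metric. The projection $\pi \co \widetilde{V}_i \to D_i$ that forgets the $\lambda$ coordinate has fibers tangent to $\partial_\lambda$, which is everywhere orthogonal to the horizontal distribution $\mathrm{span}(\partial_r, \partial_\mu)$; restricted to that distribution $d\pi$ is an isometry onto $T_{\pi(p)} D_i$. Hence $\pi$ is a Riemannian submersion, giving \eqref{item:sym}.

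Third, property \eqref{item:merid} is an area computation on the cross--sectional disk $D_i$. In the coordinates $(r,\mu)$ on this disk the area element is $f(r) \, dr \, d\mu$, and integrating over $r \in [0,R_i]$ and over a fundamental domain for $\mu$ gives a closed--form expression once the explicit $f$ and $R_i$ from \cite{fkp:jdg} are inserted. Since $R_i$ is tied to $\ell(s_i)$ by the matching condition at the boundary, a direct estimate should yield the lower bound $\zeta\lmin^2/(\lmin + 2\pi)$. I expect this last step to be the main obstacle: the corresponding area estimate in \cite{fkp:jdg} is stated in a slightly different form (for the boundary torus rather than the meridian disk), so it requires careful bookkeeping of the normalizations of $f$ and $h$, but no new geometric ideas beyond the construction already in place.
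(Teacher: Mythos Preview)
Your proposal is correct and follows essentially the same approach as the paper: invoke \cite[Theorem~2.1]{fkp:jdg} for \eqref{item:hyp} and \eqref{item:sec}, read off submersibility \eqref{item:sym} from the diagonal form of the warped--product metric, and compute the meridian disk area for \eqref{item:merid}. The only place where the paper is more explicit is \eqref{item:merid}: rather than a ``direct estimate,'' it uses the one--parameter family of metrics from \cite{fkp:jdg} indexed by $t \in (0,\tlim)$, plugs in the closed--form limit $f_{\tlim}$ from \cite[Theorem~5.4]{fkp:jdg}, and evaluates the integral exactly to $\ell(s_i)^2/(\ell(s_i)+2\pi)$; the factor $\zeta$ then enters by choosing $t$ sufficiently close to $\tlim$.
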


\begin{proof}
To construct a Riemannian metric on $M$ satisfying \eqref{item:hyp},
it suffices to construct a negatively curved metric on each solid
torus $V_i$, such that in a collar neighborhood of $\bdy V_i$, it
agrees with the hyperbolic metric in a collar neighborhood of $\bdy
C_i$. This is precisely what is done in \cite[Theorem
  2.1]{fkp:jdg}. Thus the cusp neighborhoods $C_i$ can be replaced by
solid tori $V_i$, obtaining \eqref{item:hyp}. In addition, we showed
in \cite[Lemma 2.3 and Theorem 2.5]{fkp:jdg} that the curvatures of
the resulting metric on $M$ are bounded as claimed in
\eqref{item:sec}.

For the rest of the argument, we focus on one solid torus $V=V_i$, and
drop the subscripts for convenience. We describe the metric on $V$
that was constructed in \cite[Theorem 2.1]{fkp:jdg}. Let $\tlim =
1-(2\pi/\ell(s_i))^2$, and choose a parameter $t \in (0, \tlim$).
For fixed $t$, the metric on $V$ can be described in cylindrical
coordinates by the equation
\begin{equation}\label{eq:fkp-metric}
ds^2 = dr^2 + (f_{t}(r))^2 \, d\mu^2 + (g_{t}(r))^2 \, d\lambda^2,
\end{equation}
Here, $f_t$ and $g_t$ are functions obtained by solving a certain ODE.
The coordinate $0\leq \mu \leq 1$ is measured around each meridional
circle, while $0 \leq \lambda \leq 1$ is measured perpendicular to
$\mu$ in the longitudinal direction. Furthermore, $r_0(t) \leq r \leq 0$ is
radial distance, where $r = 0$ on the boundary torus $\partial V$ and $r_0(t) < 0$ is the unique root of the function $f_{t}(r)$. The radial value $r = r_0(t)$ 
corresponds to the core of the solid torus $V$.

Observe that the expression for the metric in equation \eqref{eq:fkp-metric}
is already diagonalized, with the three coordinate vectors mutually orthogonal. 
Furthermore, if we lift the metric to the universal cover $\widetilde{V}$, the $\mu$ and $\lambda$ coordinates are globally defined. Thus the projection from $\widetilde{V} \cong D \times \RR$ to the cross-sectional disk $D$, defined by $(r , \mu, \lambda) \mapsto (r, \mu)$, is a Riemannian submersion, as claimed in \eqref{item:sym}. 

To prove the theorem, it remains to compute the area of a meridian
disk $D$.  Since the coordinate $\lambda$ is constant over a meridian
disk, the area for fixed $t$ will be given by 
$$\int_0^1\int_{r_0(t)}^0 f_t(r)\, dr d\mu = \int_{r_0(t)}^0 f_t(r)\, dr.$$

By \cite[Theorem 5.4 and equation (4)]{fkp:jdg}, we know that as $t
\to \tlim$, the functions $f_{t}$ converge uniformly to
$$f_{\tlim}(r) = \frac{\ell(s_i)\sqrt{1-\tlim}}{\sqrt{\tlim}}
\sinh(\sqrt{\tlim}(r-r_0)), \quad \mbox{where} \quad r_0 =
-\tanh^{-1}(\sqrt{\tlim})/\sqrt{\tlim}. $$
Thus, as $t \to \tlim$, the area of the meridian disk limits to 
{\setlength{\jot}{1.3ex}
\begin{eqnarray*}
  \lim_{t\to \tlim} \int_{r_0(t)}^0 f_{t}(r) \, dr &
  = & \int_{r_0}^0 f_{\tlim}(r) \, dr \\
   & = &  \int_{r_0}^0 \frac{\ell(s_i)\sqrt{1-\tlim}}{\sqrt{\tlim}}
\sinh \left( \sqrt{\tlim}(r-r_0) \right) dr \\
& = & \frac{\ell(s_i)\sqrt{1-\tlim}}{\tlim}\left(
\cosh(\sqrt{\tlim}(-r_0)) - 1\right) \\
& = & \frac{\ell(s_i)\sqrt{1-\tlim}}{\tlim}\left(
\cosh(\tanh^{-1}(\sqrt{\tlim})) - 1\right) \\
& = & \frac{\ell(s_i)\sqrt{1-\tlim}}{\tlim}\left(
\frac{1}{\sqrt{1-\tlim}} - 1\right) \\
& = & \frac{\ell(s_i) \cdot 2\pi/\ell(s_i)}{1-(2\pi/\ell(s_i))^2} \left( \frac{\ell(s_i)}{2\pi} - 1 \right) \\
&=& \frac{\ell(s_i)^2}{ \ell(s_i) + 2\pi} .
\end{eqnarray*}}

Therefore, for $t$ sufficiently close to $\tlim$, we obtain item
\eqref{item:merid}.  
\end{proof}

\section{Sweepouts and generalized Heegaard splittings}\label{sec:sweepouts}

In this section, we review the definitions of strongly irreducible
Heegaard splittings, and untelescoping for weakly reducible Heegaard
splittings.  We recall the definition of a sweepout, as well as
bounded area sweepouts.  For strongly irreducible Heegaard splittings,
the existence of bounded area sweepouts follows from results announced by Pitts and Rubinstein
\cite{pitts-rubinstein:min2}.  When the Heegaard splitting is not strongly
irreducible, we still obtain a bounded area sweepout after
untelescoping.

None of the results in this section are original. However, since the ideas described here are gathered from many sources, we found it helpful to write down a unified exposition.

\begin{define}\label{def:compression}
A \emph{compression body} $C$ is constructed by taking a closed,
oriented (possibly disconnected) surface $S$ with no $S^2$ components,
thickening it to $S \times [0,1]$, and attaching a finite number of
$1$--handles to $S\times \{1\}$ in a way that makes the result
connected.  The \emph{negative boundary} of $C$ is $\bdy_- C:= S
\times \{0\}$, and the \emph{positive boundary} is $\bdy_+ C := \bdy C
\setminus \bdy_- C$.

A handlebody of genus $g$, constructed by attaching $g$ $1$--handles to a $3$--ball, is also considered a compression body. Its negative boundary is empty, and its positive boundary is the genus $g$ surface $\bdy C$.

The \emph{spine} of a compression body $C$ consists of the negative
boundary $\bdy_- C$, along with the core arcs of the attached
$1$--handles. In the special case where $C$ is a handlebody, a spine
is any graph whose regular neighborhood is $C$.
In either case, a compression body $C$ deformation retracts to its
spine, and in fact, the complement of the spine in $C$ is homeomorphic to $\bdy_+ C
\times (0,1)$.
\end{define}

\begin{define}\label{def:heegaard}
A \emph{Heegaard splitting} of a compact orientable 3--manifold $M$ is an expression $M = C_1 \cup C_2$, where $C_1$ and $C_2$ are compression bodies glued along their positive boundaries. The surface
$$\Sigma = C_1 \cap C_2 = \bdy_+ C_1 = \bdy_+ C_2.$$
is called the  \emph{Heegaard surface} of the splitting $M = C_1 \cup C_2$. We often speak of the Heegaard splitting and its surface interchangeably, as one determines the other.

A Heegaard splitting $M = C_1 \cup C_2$
defines a \emph{sweepout}. This is a map $f\co M \to [-1,1]$,
such that:
\begin{itemize}
\item $f^{-1}(-1)$ is a spine of $C_1$ and $f^{-1}(1)$ is a spine of $C_2$.
\item For each $t \in (-1,1)$, $f^{-1}(t)$ is a surface isotopic to $\Sigma = \bdy_+ C_1 = \bdy_+ C_2$.
\end{itemize}
Writing $\Sigma_t = f^{-1}(t)$, we obtain a $1$--parameter family of surfaces isotopic to $\Sigma$, interpolating between the two spines.
\end{define}

\begin{define}\label{def:weak-strong}
A Heegaard splitting $M = C_1 \cup C_2$ is called \emph{reducible} if
there are properly embedded
disks $D_i \subset C_i$ that are essential (i.e.\ $\bdy D_i \subset
\bdy_+ C_i$ is nontrivial on $\bdy_+ C_i$) and such that $\bdy D_1 =
\bdy D_2 \subset \Sigma$.  If $M$ is irreducible, the sphere $D_1 \cup
D_2$ must bound a $3$--ball, hence $\Sigma$ is obtained by
 adding extra handles inside the
$3$--ball, a process called \emph{stabilization}.  Otherwise, if the splitting is not reducible, it is
called \emph{irreducible}.

Following Casson and Gordon
\cite{cg:heegaard}, we say a Heegaard splitting is \emph{weakly
  reducible} if there exist essential compression disks $D_i \subset C_i$ such that $\bdy D_1 \cap  \bdy D_2 = \emptyset$.   It is
\emph{strongly irreducible} otherwise.
\end{define}

Casson and Gordon showed that an irreducible manifold $M$ with a
non-stabilized weakly reducible splitting must contain an
incompressible surface \cite{cg:heegaard}.  Scharlemann and Thompson
showed that in this setting, we may always cut $M$ along
incompressible surfaces and obtain a strongly irreducible
\emph{generalized Heegaard splitting} \cite{scharlemann-thompson}. Our
description of these splittings follows Lackenby's paraphrase
\cite{lackenby:tunnel-alg}.

\begin{define}\label{def:gen-heegaard}
A \emph{generalized Heegaard splitting} of a compact orientable
$3$--manifold $M$ is a decomposition of $M$ into submanifolds
$W_1, \ldots, W_m$, for an even $m$, where each $W_i$ is a disjoint
union of compression bodies, glued along their boundaries as
follows. For each $0 < i < m/2$, we have
$$\bdy_- W_{2i} \cap M^\circ = \bdy_- W_{2i+1} \cap M^\circ, \qquad
\mbox{and} \qquad \bdy_+ W_{2i} = \bdy_+ W_{2i-1}.$$
(Here, the notation $M^\circ$ indicates the interior of $M$.) 
The surface $F_i = W_i \cap W_{i+1}$ is called an \emph{even} or \emph{odd} surface, depending on the parity of $i$. 
By construction, each component of the odd surface $F_{2i-1}$ is a Heegaard surface of the corresponding component of $(W_{2i-1} \cup W_{2i})$. 
\end{define}

Using the work of Casson and Gordon \cite{cg:heegaard}, Scharlemann
and Thompson proved that one may start with a $3$--manifold $M$ and an
irreducible splitting $\Sigma$, and construct a generalized Heegaard
splitting with the following properties:
\begin{enumerate}
\item \label{item:odd} Every component of every odd surface $F_{2i-1}$
  is a strongly irreducible Heegaard surface for the component of
  $(W_{2i-1} \cup W_{2i})$ that contains it. 
\item \label{item:even} Every even surface $F_{2i}$ is incompressible, with no 2--sphere
  components.
\item \label{item:euler} Every $F_i$ satisfies $\chi(\Sigma) \leq \chi(F_i) \leq 0$.
\end{enumerate}
A generalized Heegaard splitting with these properties is called \emph{thin}.

The process of constructing a generalized Heegaard splitting is called \emph{untelescoping}, and the inverse process (which recovers $\Sigma$) is called \emph{amalgamation}. We describe the amalgamation process briefly, since we will need it below.

Choose an even surface in the generalized splitting, say $F_2$.  By
Definition \ref{def:gen-heegaard}, a component $S_2 \subset F_2$ is
the negative boundary of a compression body $C_2 \subset W_2$ and a
compression body $C_3 \subset W_3$. By Definition
\ref{def:compression}, $C_2$ is constructed by attaching $1$--handles
to a surface $S \times \{1\}$, which is parallel to $S_2 = S \times \{
0 \}$. Extend these $1$--handles through $S \times [0,1]$, and attach
them directly to $S_2$. Similarly, extend the $1$--handles of the
compression body $C_3 \subset W_3$, to attach them directly to $S_2
\subset F_2$. This can be done while keeping the attaching disks
disjoint from the attaching disks of the $1$--handles in $C_2$. See
\cite[Figure 12]{lackenby:tunnel-alg}.

If we perform this construction for every component of $F_2$, the resulting surface $F_2'$,  obtained from $F_2$ by attaching annuli, is now a Heegaard surface of $ W_1 \cup \ldots \cup W_4$. In other words, we have obtained a generalized Heegaard splitting with fewer pieces. Continuing in this manner, we amalgamate all the pieces and obtain a Heegaard surface for $M$.
Although there are various choices involved this procedure (choosing handle structures on the compression bodies, choosing an order in which to amalgamate), amalgamation is guaranteed to recover the original Heegaard surface $\Sigma \subset M$. See \cite[Proposition 3.1]{lackenby:tunnel-alg}.

Our goal is to work with generalized Heegaard splittings in the context of a negatively curved metric, as constructed in Section \ref{sec:Dehn}. 

\begin{define}\label{def:almost-minimal}
Let $M$ be an orientable Riemannian $3$--manifold. An orientable surface $F \subset M$ is called \emph{almost minimal} if $F$ is either a minimal surface, or is the boundary of an $\varepsilon$--neighborhood of a non-orientable minimal surface $F'$.
\end{define}

We will use the following result on essential surfaces and minimal
surfaces; see \cite{freedman-hass-scott, meeks-simon-yau, schoen-yau}.

\begin{lemma}\label{lemma:even-minimal}
Let $M$ be a negatively curved Riemannian $3$--manifold, whose
boundary (if any) consists of minimal surfaces.  Then
\begin{enumerate}
\item A connected, orientable, essential surface $F \subset M$ is
  isotopic to an almost minimal surface, as in Definition
  \ref{def:almost-minimal}.
\item If $F$ and $G$ are disjoint, connected, non-parallel essential surfaces,
  then their almost--minimal representatives are also disjoint.
\end{enumerate} 
\end{lemma}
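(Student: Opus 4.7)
The plan is to invoke the classical work of Freedman--Hass--Scott, Meeks--Simon--Yau, and Schoen--Yau on least-area representatives of essential surfaces, being careful about the non-orientable degeneration that motivates the ``almost minimal'' terminology of Definition \ref{def:almost-minimal}.

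For part (1), I would fix the isotopy class of $F$ and consider the infimum of areas of embedded surfaces in that class. Since $F$ is essential and $M$ is negatively curved with minimal boundary, a standard tubular-neighborhood and monotonicity argument bounds this infimum away from zero, and since the non-compact ends are horospherical cusps, no area is lost to infinity through the cusps (a minimizing sequence can be pushed out of sufficiently small horoball neighborhoods without increasing area). The existence theorem of Meeks--Simon--Yau then produces an embedded minimal surface $F^*$ as a limit of a minimizing sequence. If $F^*$ is orientable it is isotopic to $F$ and gives the desired representative; if $F^*$ is non-orientable, then the minimizing sequence converges to $F^*$ with multiplicity two, and $F$ is isotopic to the boundary of an $\varepsilon$--neighborhood of $F^*$, which is exactly the type of surface admitted by Definition \ref{def:almost-minimal}.

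For part (2), I would apply the same procedure simultaneously to the disjoint union $F \sqcup G$, following the argument of Freedman--Hass--Scott. Equivalently, one can minimize $F$ first to obtain the almost-minimal $F^*$, and then minimize $G$ in the complement $M \setminus F^*$; the assumption that $F$ and $G$ are non-parallel and essential keeps $G$ essential in this complement, so the procedure again produces an almost-minimal representative $G^*$. If $F^*$ and $G^*$ were to intersect, the Meeks--Yau exchange/roundoff construction for minimal surfaces in negative curvature would produce an isotopic surface with strictly smaller area (negative curvature rules out coincidence of two distinct minimal surfaces along an open set, so the exchange is genuinely area-decreasing after smoothing), contradicting minimality. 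Hence the almost-minimal representatives can be chosen disjoint.

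The main obstacle, and the reason this lemma is not entirely immediate, is the non-orientable degeneration in part (1): a two-sided essential surface whose area-minimizer collapses onto a one-sided minimal surface with multiplicity two cannot be represented by an embedded minimal surface in its own isotopy class. The boundary-of-regular-neighborhood construction in Definition \ref{def:almost-minimal} is designed precisely for this case. For part (2) this obstacle reappears only as a parameter choice: one must take the $\varepsilon$--neighborhoods of any non-orientable components small enough that they remain mutually disjoint and disjoint from the other almost-minimal representatives, which is possible as soon as the underlying minimal surfaces themselves are disjoint.
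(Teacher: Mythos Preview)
Your proposal is correct and matches the paper's approach: the paper does not actually prove this lemma but simply cites Freedman--Hass--Scott, Meeks--Simon--Yau, and Schoen--Yau, exactly the sources whose arguments you have unpacked. Your discussion of the non-orientable degeneration and the exchange/roundoff argument for disjointness is a faithful elaboration of what those references provide.
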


In particular, if $\{ W_1, \ldots, W_m \}$ form a thin generalized
Heegaard splitting for $M$, one may isotope the $W_j$ into a position
where each piece $M_i = W_{2i-1} \cup W_{2i}$ has minimal boundary. A
boundary surface of $M_i$ may be part of the original boundary of $M$,
or it may result from cutting along a minimal surface corresponding to
an even $F_i$, as in Lemma \ref{lemma:even-minimal}.
(Note that if $F_i$ is isotopic to the orientable double cover of a
non-orientable minimal surface $F'_i$, then cutting along $F'_i$ is
equivalent to cutting along $F_i$, and produces orientable, minimal
boundary.)

Every minimal surface in $M$ satisfies an upper bound on area, related to its curvature.

\begin{lemma}\label{lemma:minimal-surf-area}
Let $M$ be an orientable Riemannian $3$--manifold whose  sectional curvatures are bounded above by $\kmax < 0$. Let $S \subset M$ be a minimal surface. Then
\begin{equation}\label{eq:min-area}
\area(S) \: \leq \: 2\pi \chi(S) / \kmax.
\end{equation}
\end{lemma}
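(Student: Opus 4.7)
The plan is to combine the Gauss equation with the Gauss--Bonnet theorem, exactly as in the classical bound for minimal surfaces in negatively curved manifolds.

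First, I would invoke the Gauss equation to compare the intrinsic curvature $K_S$ at a point $p \in S$ with the ambient sectional curvature of $T_p S \subset T_p M$:
\begin{equation*}
K_S(p) \: = \: K_M(T_p S) + \det(II_p),
\end{equation*}
where $II_p$ is the second fundamental form of $S$ in $M$. Since $S$ is minimal, the trace of $II_p$ vanishes, so its two principal curvatures have the form $\pm \lambda$ for some $\lambda \geq 0$; hence $\det(II_p) = -\lambda^2 \leq 0$. Combining with the ambient curvature bound, we obtain the pointwise inequality $K_S(p) \leq K_M(T_pS) \leq \kmax < 0$.

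Next, I would apply the Gauss--Bonnet theorem. Because $S$ arises in the intended application as an almost minimal representative of a closed essential surface (or the boundary of a tubular neighborhood of a non-orientable one), we may assume $S$ is closed, so there is no boundary term. Integrating the pointwise bound yields
\begin{equation*}
2\pi \chi(S) \: = \: \int_S K_S \, dA \: \leq \: \kmax \cdot \area(S).
\end{equation*}
Since $\kmax < 0$, dividing through by $\kmax$ reverses the inequality and gives precisely the desired bound \eqref{eq:min-area}. (Note that $\chi(S) \leq 0$, as a minimal surface in a negatively curved manifold cannot be a sphere, so both sides of \eqref{eq:min-area} are nonnegative, consistent with the statement.)

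There is no real obstacle here; the argument is standard, and the only mild care needed is in handling the almost-minimal case and confirming the sign conventions so that the division by the negative quantity $\kmax$ flips the inequality in the right direction.
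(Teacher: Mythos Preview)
Your proof is correct and follows essentially the same approach as the paper: use the Gauss equation together with minimality to bound the intrinsic curvature of $S$ pointwise by $\kmax$, then integrate via Gauss--Bonnet and divide by the negative quantity $\kmax$. The only cosmetic difference is that the paper writes the extrinsic term as $\lambda_1\lambda_2$ with $\lambda_1 = -\lambda_2$, whereas you phrase it as $\det(II_p) = -\lambda^2$; the content is identical.
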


\begin{proof}
This follows from the Gauss--Bonnet theorem, combined with the properties of minimal surfaces.  Let  $\lambda_1, \lambda_2$ denote the
principal normal curvatures at a point $x \in S$, let $K_x$ denote the sectional curvature of $M$ along $S$, and let $\kappa$ denote the Gaussian curvature of $S$ at $x \in S$.
Then the minimality of $S$ implies  that
$\lambda_1 = -\lambda_2$
hence 
$$\kappa \: = \: K_x + \lambda_1\lambda_2  \: \leq \: K_x  \: \leq \: \kmax.$$
Integrating over $S$, we obtain
$$\kmax \, \area(S) \: = \: \int_S \kmax \, dA \: \geq \:  \int_S \kappa \, dA 
\: = \: 2 \pi \, \chi(S),$$
where the last equality is the Gauss--Bonnet theorem. Dividing the
previous equation by $\kmax$ gives the desired statement.
\end{proof}

Note that if $F$ is the orientable double cover of a non-orientable
minimal surface $F'$, as in Definition \ref{def:almost-minimal}, then
choosing a sufficiently small $\varepsilon$--neighborhood ensures that
$\area(F)$ satisfies a bound arbitrarily close to \eqref{eq:min-area}.
This will be useful for Heegaard surfaces.

The following lemma is essentially a reformulation of an announced
result by Pitts and Rubinstein \cite{pitts-rubinstein:min2}.  As
mentioned in the introduction, a complete proof of their result has
not yet appeared.

\begin{lemma}\label{lemma:sweepout}
Let $M$ be an orientable Riemannian $3$--manifold whose non-compact
ends (if any) are isometric to horospherical cusp neighborhoods, and
whose boundary (if any) consists of minimal surfaces.  Suppose the
sectional curvatures of $M$ are bounded above by $\kmax < 0$.  Let
$\Sigma$ be a strongly irreducible Heegaard surface for $M$. Then, for
any $\zeta \in (0,1)$, there is a sweepout of $M$ corresponding to
$\Sigma$, such that every level surface $\Sigma_t$ satisfies
$$\area(\Sigma_t) \: \leq \: \: \frac{ 2\pi \chi(\Sigma) } {\zeta \, \kmax }.$$
\end{lemma}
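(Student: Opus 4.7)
The plan is to deduce the lemma from the Pitts--Rubinstein min-max construction applied to the strongly irreducible splitting $\Sigma = \bdy_+ C_1 = \bdy_+ C_2$. Let $\Lambda$ denote the space of smooth sweepouts associated with this splitting, and define the min-max width
$$W \: = \: \inf_{f \in \Lambda} \, \max_{t \in [-1,1]} \area(f^{-1}(t)).$$
The central Pitts--Rubinstein statement, assumed throughout this paper, is that because $\Sigma$ is strongly irreducible, $W$ is realized as the area of an almost minimal surface $\Sigma_0 \subset M$ that is isotopic to $\Sigma$ in the sense of Definition \ref{def:almost-minimal}. Crucially, strong irreducibility prevents the min-max sequence from degenerating to a surface of smaller genus, so $\chi(\Sigma_0) = \chi(\Sigma)$.

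Next I would bound $W$ using Lemma \ref{lemma:minimal-surf-area}. If $\Sigma_0$ is itself minimal, the lemma directly yields $W = \area(\Sigma_0) \leq 2\pi \chi(\Sigma)/\kmax$; if instead $\Sigma_0$ is the boundary of a sufficiently small $\varepsilon$--neighborhood of a one-sided minimal surface, the remark following Lemma \ref{lemma:minimal-surf-area} gives the same bound up to an arbitrarily small error. Since a strongly irreducible Heegaard surface in a negatively curved $3$--manifold has genus $\geq 2$, we have $\chi(\Sigma) < 0$; combined with $\zeta \in (0,1)$ and $\kmax < 0$ this yields the strict inequality
$$\frac{2\pi \, \chi(\Sigma)}{\zeta \, \kmax} \: > \: \frac{2\pi \, \chi(\Sigma)}{\kmax} \: \geq \: W.$$
By definition of $W$ as an infimum, I can then select a sweepout $f \in \Lambda$ whose maximum level area is strictly less than $2\pi\chi(\Sigma)/(\zeta\kmax)$; every level $\Sigma_t = f^{-1}(t)$ of this sweepout satisfies the required bound. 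The factor $1/\zeta$ simultaneously absorbs the gap between the infimum and an attained sweepout, and any small loss incurred in the non-orientable case.

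The main obstacle lies not in this interpretation but in invoking the Pitts--Rubinstein machinery itself for the present class of manifolds. The work of De Lellis--Pellandini establishes min-max existence of a minimal surface in the closed case, and Souto has announced the combinatorial step that identifies it (up to the $\varepsilon$--neighborhood modification) with an isotopic copy of the Heegaard surface. For the cusped or bounded manifold $M$ considered here, one must further confine the min-max sequence to a compact region: the maximum principle shows that a minimal surface cannot cross a horospherical cross-section into a cusp (horospheres being mean-convex) and cannot touch a minimal boundary component (by the strong maximum principle). These technical extensions are consistent with the paper's overall working assumption that the Pitts--Rubinstein theorem is valid as stated.
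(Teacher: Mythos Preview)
Your argument is essentially the paper's own proof: both invoke the Pitts--Rubinstein min-max theorem to bound the minimax width $W$ by (approximately) $2\pi\chi(\Sigma)/\kmax$ via Lemma~\ref{lemma:minimal-surf-area}, and then use $\zeta<1$ to absorb the slack between the infimum and an actual sweepout. The one discrepancy is that the paper's version of Pitts--Rubinstein allows the min-max minimal surface to differ from $\Sigma$ by the attachment of a single tube, so your assertion $\chi(\Sigma_0)=\chi(\Sigma)$ is slightly too strong; however, since attaching a tube only gives $\chi(\Sigma_0)\geq\chi(\Sigma)$, the inequality $2\pi\chi(\Sigma_0)/\kmax \leq 2\pi\chi(\Sigma)/\kmax$ still holds and your argument goes through unchanged.
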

  
\begin{proof}
As described in Definition \ref{def:heegaard}, the Heegaard surface
$\Sigma$ specifies a sweepout of $M$.  In any choice of sweepout,
there is a surface $\Sigma_t$ of maximal area.  Let $A$ be the infimum
of these maximal areas, over all sweepouts corresponding to
$\Sigma$. The number $A$ is called a \emph{minimax value}.

The work of Pitts and Rubinstein implies that there is a minimal surface $F
\subset N$ whose area is the minimax value $A$ if $F$ is orientable,
or $A/2$ if $F$ is non-orientable \cite{pitts-rubinstein:min2}.
Furthermore, $\Sigma$ is constructed by taking an almost minimal
surface corresponding to $F$, and then possibly attaching a single
tube.  Both of these operations (taking the boundary of an
$\varepsilon$--neighborhood, attaching a tube) can be achieved while
increasing the area $A$ by an arbitrarily small amount. Thus, Lemma
\ref{lemma:minimal-surf-area} guarantees that for any $\zeta \in
(0,1)$, there is a sweepout whose level surfaces satisfy
$$\area(\Sigma_t) \: \leq \: \:  2\pi \chi(\Sigma) / \zeta \, \kmax ,$$
as desired.
\end{proof}

\section{Core of Dehn filling and Heegaard surfaces}\label{sec:breslin}

The goal of this section is to prove Theorem \ref{thm:intro-breslin}, which was stated in the introduction. Most of the work here goes into proving the following, slightly simpler statement.

\begin{theorem}\label{thm:core-gen-splitting}
Let $M$ be an orientable Riemannian $3$--manifold whose non-compact
ends (if any) are isometric to horospherical cusp neighborhoods, and
whose boundary (if any) consists of minimal surfaces.  Suppose the
sectional curvatures of $M$ are bounded above by $\kappa_{\max} < 0$.
Let $\{W_1, \ldots, W_m\}$ be a thin generalized Heegaard splitting of
$M$, with separating surfaces $F_i = W_i \cap W_{i+1}$.

Let $V$ be a solid torus in $M$, such that the metric on $V$ is submersible. 
If a cross--sectional disk $D$ of $V$ satisfies
\begin{equation}\label{eq:core-gen-splitting}
\area(D) \: > \: \frac{2\pi \, \chi(F_i)} {\kappa_{\max}} \quad \forall i,
\end{equation}
then the core curve $\gamma$ of $V$ is isotopic into one of the odd
surfaces $F_i$.
\end{theorem}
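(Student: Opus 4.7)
Plan. The strategy is to reduce to the case of a single strongly irreducible Heegaard splitting, where a Breslin-style argument applies. First, I would use Lemma~\ref{lemma:even-minimal} to place each even surface $F_{2i}$ in almost-minimal position, so that by Lemma~\ref{lemma:minimal-surf-area} it satisfies
\[
\area(F_{2i}) \:\leq\: \frac{2\pi \, \chi(F_{2i})}{\kappa_{\max}} \:<\: \area(D),
\]
where the strict inequality is the hypothesis \eqref{eq:core-gen-splitting}. The submersibility of the metric on $V$ (Definition~\ref{def:submersible}) provides a Riemannian submersion $\pi \co \widetilde{V} \to D$ whose differential is a pointwise orthogonal projection. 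For any properly embedded surface $S \subset V$ with $\bdy S \subset \bdy V$, the area formula applied to $\pi|_S$ (after passing to the universal cover) gives $\area(S) \geq |\deg(\pi|_S)| \cdot \area(D)$, where $\deg(\pi|_S)$ equals the total meridional winding of $\bdy S$ on $\bdy V$, and equivalently the algebraic intersection number $\gamma \cdot S$.

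Applied to $S = F_{2i} \cap V$, this area comparison forces $\gamma \cdot F_{2i} = 0$ for every even $i$. A standard innermost-disk and bigon argument, using the incompressibility of $F_{2i}$, then isotopes $\gamma$ off every even surface, placing $\gamma$ in a single piece $M_j = W_{2j-1} \cup W_{2j}$. A component-by-component analysis shows further that each component of $F_{2i} \cap V$ is either an inessential disk (ruled out by incompressibility) or a longitudinal annulus (the only embedded essential annuli in a solid torus with zero total meridional winding), which in turn must be boundary-parallel in $V$. These components can all be pushed out of $V$ by an ambient isotopy of $F_{2i}$, so after this cleanup $V$ itself lies in $M_j$ as well.

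Inside $M_j$, the surface $F_{2j-1}$ is a strongly irreducible Heegaard surface, so Lemma~\ref{lemma:sweepout} produces a sweepout $\{\Sigma_t\}$ whose level surfaces satisfy $\area(\Sigma_t) \leq 2\pi\chi(F_{2j-1})/(\zeta \kappa_{\max})$; choosing $\zeta \in (0,1)$ close enough to $1$, this bound is strictly less than $\area(D)$ by hypothesis~\eqref{eq:core-gen-splitting} applied to $i = 2j-1$. Running the same submersion-plus-degree calculation on each $S = \Sigma_t \cap V$ again constrains every component to be an inessential disk or a longitudinal annulus. As $t$ ranges over $[-1, 1]$ and $\Sigma_t$ sweeps $M_j$ from the spine of one compression body to the other, continuity together with the compression-body structure must produce, at some critical parameter $t^\ast$, a core-parallel annulus component of $\Sigma_{t^\ast} \cap V$. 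This annulus furnishes the desired isotopy of $\gamma$ into $\Sigma_{t^\ast}$, and hence into $F_{2j-1}$.

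The main obstacle is this final continuity-plus-degree step, which is the heart of Breslin's original argument in \cite{breslin}. His setting is closed, constant-curvature, and strongly irreducible; here one must carry the sweepout analysis through with possible cusp ends and minimal-surface boundary of $M_j$ (coming from the even $F_{2i}$), and with only a variable curvature bound $\kappa_{\max} < 0$. These adaptations, rather than the softer reduction to the single-piece case described above, are where the genuine technical work of this section will concentrate.
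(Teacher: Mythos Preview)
Your reduction to a single piece $M_j = W_{2j-1}\cup W_{2j}$ is essentially the same as the paper's, and your area/submersion argument is a reasonable variant of Lemma~\ref{lemma:gamma-miss-sfce}. The genuine gap is in the endgame inside $M_j$.

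First, the sentence ``Running the same submersion-plus-degree calculation on each $S = \Sigma_t \cap V$ again constrains every component to be an inessential disk or a longitudinal annulus'' is not justified. For the even surfaces you could lean on incompressibility to clean up components; the sweepout surfaces $\Sigma_t$ are highly compressible, and the area bound only yields $|\deg(\pi|_S)| = |\gamma\cdot \Sigma_t| = 0$ (equivalently, some fiber of $\pi$ misses $S$). It says nothing about individual components: $\Sigma_t\cap V$ may well contain meridian disks in cancelling pairs, annuli whose boundary wraps $n$ times longitudinally, or pieces of higher genus. So you cannot simply wait for a ``core-parallel annulus component'' to appear.

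Second, even granting that each $V\setminus\Sigma_t$ contains an essential loop, three separate steps are needed, and ``continuity together with the compression-body structure'' does not substitute for them. The paper (following Breslin) proceeds as follows: (i) an open--cover/connectedness argument on $(-1,1)$---partitioning $t$ according to which side of $\Sigma_t$ carries an essential loop of $V$---produces a specific level $\Sigma_r$ that itself contains a simple loop essential in $V$ (Lemma~\ref{lemma:breslin-lemma2}); (ii) the Rubinstein--Scharlemann graphic together with Scharlemann's No Nesting Lemma upgrades this to an embedded annulus $A$ from $\partial N(\gamma)$ to the Heegaard surface (Lemma~\ref{lemma:breslin-lemma1}); (iii) a thin-position argument of Schultens, as adapted by Breslin, shows that $A$ meets $\partial N(\gamma)$ in a longitude, i.e.\ wraps exactly once, so that $\gamma$ itself---not merely $\gamma^n$---is isotopic into the surface (Lemma~\ref{lemma:breslin-lemma5}). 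Your outline does not address (iii) at all, and without it you only get $\gamma^n$ isotopic into $F_{2j-1}$ for some unspecified $n$, which is not the conclusion of the theorem.
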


The statement of Theorem \ref{thm:core-gen-splitting} is similar to
Theorem \ref{thm:intro-breslin}, with the single difference that a
Heegaard surface $\Sigma$ has been replaced by a thin generalized
Heegaard splitting. In our applications, this will be the generalized
splitting obtained by untelescoping $\Sigma$.  Furthermore, by
amalgamating the generalized splitting to recover $\Sigma$, we will
see that Theorem \ref{thm:intro-breslin} follows quickly from this
statement.

For the remainder of the section, we will use the definitions and
notation of Theorem \ref{thm:core-gen-splitting}. In particular, $M$
will denote a negatively curved $3$--manifold, with a generalized
Heegaard splitting $\{W_1, \ldots, W_m\}$. As in the theorem, $V$ will
denote a submersible solid torus in $M$, whose cross-sectional disk
satisfies \eqref{eq:core-gen-splitting}.

The proof of Theorem \ref{thm:core-gen-splitting} is essentially due to
Breslin, and follows the same line of argument as in his paper
\cite{breslin}.  We need to make slight modifications to his argument
to accommodate manifolds with boundary, generalized Heegaard splittings, and the metric of variable negative curvature. However, the spirit of the argument is the same.  Where our argument differs from his, we walk through the details carefully.

The proof breaks down into the following claims:
\begin{enumerate}
\item The core curve $\gamma$ of $V$ is isotopic into the complement of the even surfaces. This means that we can work with a single odd surface $S = F_i$, which is strongly irreducible in its component. 
We will show this in Lemma \ref{lemma:VintF-trivial}.
\item The Heegaard surface $S = F_i$ contains a simple loop homotopic to $\gamma^n$ for some $n$. 
The geometric proof of this claim, based on \cite[Lemma 2]{breslin}, appears in Lemma \ref{lemma:breslin-lemma2}.
\item There is an embedded annulus $A \subset M$, such that one component of $\bdy A$ is on $S$ and the other component of $\bdy A$ is on $\bdy N(\gamma)$, a tubular neighborhood of $\gamma$. 
 The  argument is nearly the same as that of   \cite[Lemma 1]{breslin}, and is recalled in Lemma  \ref{lemma:breslin-lemma1}.  
\item The loop $\bdy A \cap \bdy N(\gamma)$ is isotopic to $\gamma$, hence $\gamma$ is isotopic into $S$ through $A$. The proof is identical to  \cite[Lemma 5]{breslin}, which we restate in Lemma  \ref{lemma:breslin-lemma5}.  
\end{enumerate}

The following lemma is useful for both incompressible surfaces and
strongly irreducible Heegaard surfaces.  The proof is inspired by
\cite[Claim 1]{breslin}.

\begin{lemma}\label{lemma:gamma-miss-sfce}
Let $M$ and $V$ be as in Theorem \ref{thm:core-gen-splitting}. Suppose
that $F$ is a compact, orientable surface embedded in $M$, such that
$$\area(F) \: < \: \area(D),$$
where $D$ is the cross-sectional disk in $V$. Then $V \setminus F$
contains a closed curve that is essential in $V$. 
\end{lemma}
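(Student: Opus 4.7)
The plan is to exploit the submersibility of $V$ to pit the area of $F$ against the area of $D$, and thereby pigeonhole some longitudinal fiber of $V$ into the complement of $F$. Granting (as happens automatically for the tube about a closed geodesic and for the negatively curved tori of Theorem \ref{thm:negcurv-metric}) that the Riemannian submersion $\pi\colon\widetilde V\to D$ is equivariant under the deck action of $\pi_1(V)\cong\ZZ$ on $\widetilde V\cong D\times\RR$, it descends to a Riemannian submersion $\bar\pi\colon V\to D$. Since $V$ is compact and $\bar\pi$ is a proper submersion, Ehresmann's theorem makes $\bar\pi$ a fiber bundle; because $D$ is contractible and $V$ is connected, the fiber is a single circle and $\bar\pi$ is nothing but the projection $D\times S^1\to D$. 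In particular every fiber of $\bar\pi$ is a longitudinal circle in $V$, isotopic to the core, hence essential.

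The main geometric content is then a pointwise Jacobian estimate. The differential of a Riemannian submersion is orthogonal projection onto a $2$-dimensional target, so at every point of any smooth surface its restriction has Jacobian at most $1$. Applying this on $F\cap V$ and combining with the coarea formula yields
\[
\int_D \#\bigl(\bar\pi^{-1}(p)\cap F\bigr)\, dA_D \: \leq \: \area(F\cap V) \: \leq \: \area(F) \: < \: \area(D).
\]
Consequently there is a positive-measure set of $p\in D$ with $\bar\pi^{-1}(p)\cap F=\emptyset$. For any such $p$, the fiber $\bar\pi^{-1}(p)$ is a simple closed longitudinal curve in $V\setminus F$ that is essential in $V$, as required.

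The only delicate step I anticipate is justifying the $\ZZ$-equivariance of $\pi$, i.e.\ that the deck action preserves the fibers of $\pi$ so that the descent $\bar\pi$ is well-defined. In every case that will be applied later in the paper this is immediate from the explicit metric $(r,\mu,\lambda)\mapsto(r,\mu)$, and I would argue that the phrase ``its cross-sectional disk $D$'' in Definition \ref{def:submersible} is intended to bake this equivariance into the notion of submersibility. With that understood, the proof reduces to the single area comparison above, in the spirit of the opening step of Breslin's argument \cite{breslin}.
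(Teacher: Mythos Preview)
Your argument is correct once the equivariance of $\pi$ is granted, but it takes a genuinely different route from the paper. The paper does \emph{not} assume that the submersion $\pi\colon\widetilde V\to D$ descends to $V$; it works entirely upstairs. First it disposes of the case where $F\cap V$ already contains an essential curve (push it off $F$). Otherwise $F\cap V$ lifts isometrically to $\widetilde V$, and the area inequality shows that each lift misses some fiber $\{x\}\times\RR$, hence no component of any lift separates the two ends of $\widetilde V$. A compactness argument (only finitely many lifts meet the region between $D_{-n}$ and $D_n$, and each bounds a compact ``dead-end'' piece) then produces a path between arbitrarily distant meridian disks avoiding the full preimage of $F$; the projection of the resulting non-compact component contains the desired essential curve.

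Your approach trades this topological bookkeeping for a single application of the coarea formula, at the cost of the extra hypothesis that the $\ZZ$--action preserves the fibers of $\pi$. As you note, this holds in every instance used in the paper (tubes about geodesics and the tori of Theorem~\ref{thm:negcurv-metric}), but it is not part of Definition~\ref{def:submersible} as written, and the paper's proof is designed precisely so as not to need it. So: your proof is valid for the applications, and pleasantly short, but the paper's proof is the one that matches the stated hypotheses.
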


\begin{proof}
If $F \cap V$ contains a closed curve that is essential in $V$, we may
homotope this curve to one side of $F$, into $V \setminus F$,
satisfying the conclusion of the lemma. Thus we may assume that $F
\cap V$ does not contain an essential curve.

Consider lifts of $F \cap V$ in $\widetilde{V}$, the universal cover
of $V$.  Since $F \cap V$ does not contain an essential curve, there
is a lift $\widetilde{F}$ of $F\cap V$ in $\widetilde{V}$ that is
isometric to $F \cap V$.  Because $\widetilde{V}$ is a ball, any
connected component of $\widetilde{F}$ must separate $\widetilde{V}$.
We want to show that the two ends of $\widetilde{V}$ are contained in
the same component of $\widetilde{V} \setminus \widetilde{F}$.

Let $D$ be a cross-sectional disk of $\widetilde{V}$. By Definition
\ref{def:submersible}, there is a Riemannian submersion $f\co
\widetilde{V} \to D$. Because Riemannian submersions reduce area, and
$\area(F \cap V) < \area(D)$, the projection of $\widetilde{F}$ must
miss some point $x \in D$. Then the fiber $\{x \} \times \RR \subset
\widetilde{V}$ is disjoint from $\widetilde{F}$, hence no component of
$\widetilde{F}$ can separate the ends of $\widetilde{V}$.

The lift of a meridian disk to $\widetilde{V}$ consists of a disjoint
family of disks $D_j$, for $j \in \mathbb{Z}$, where a deck
transformation maps $D_j$ to $D_{j+1}$. For a natural number $n$, let
$R = R_n$ be the component of $\widetilde{V} \setminus (D_{-n} \cup
D_n)$ that has compact closure.  Because $F$ is compact, there are
finitely many lifts $\widetilde{F}_1, \dots, \widetilde{F}_k$ of $F
\cap V$ that intersect the closure of $R$.  Each component
$\widetilde{F}_i$ splits $R$ into two pieces, one of which does not
separate the ends of $\widetilde{V}$.  Thus the set $\widetilde{V}
\setminus (\widetilde{F}_1 \cup \dots \cup \widetilde{F}_k)$ contains
a component that intersects both $D_{-n}$ and $D_{n}$. In other words,
there is a path from $D_{-n}$ to $D_{n}$ that is disjoint from the
complete preimage of $F \cap V$. Since $n$ was arbitrary, it follows
that some component of the preimage of $V\setminus F$ in
$\widetilde{V}$ has non-compact closure. The projection of this
component to $V$ must contain a non-trivial curve.
\end{proof}

Using Lemma \ref{lemma:gamma-miss-sfce}, we can show that the core
curve $\gamma$ of $V$ is disjoint from the even, incompressible
surfaces in the generalized Heegaard splitting.

\begin{lemma}\label{lemma:VintF-trivial}
Let $M$ and $V$ be as in Theorem \ref{thm:core-gen-splitting}. Let $F
= F_2 \cup F_4 \cup \ldots \cup F_{m-2}$ be the union of the even
surfaces in the generalized Heegaard splitting of $M$, and suppose
(following Lemma \ref{lemma:even-minimal}) that each even $F_{i}$ has
been isotoped to be almost minimal in $M$. Suppose that the
cross-sectional disk of $V$ satisfies
$$
\area(D) \: > \: 2\pi \, \chi(F_{i}) / \kappa_{\max} \quad \forall i.
$$
Then some component $V'$ of $V \setminus F$ is a solid torus, isotopic
in $M$ to $V$ itself. 
Furthermore, each component of $V \setminus V'$ is a trivial
$I$--bundle over a subsurface of $\bdy V$.
\end{lemma}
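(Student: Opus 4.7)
The plan is to show that, after standard normalization isotopies, every component of $F \cap V$ is $\bdy$-parallel in $V$. Once this is in place, the component $V'$ of $V \setminus F$ that contains the core $\gamma$ is a solid torus isotopic to $V$, while every other component of $V \setminus V'$ is a trivial $I$-bundle over a sub-annulus of $\bdy V$.

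I would begin by recording the area estimate coming from Lemma \ref{lemma:minimal-surf-area}: each almost-minimal $F_i$ satisfies $\area(F_i) \leq 2\pi\chi(F_i)/\kmax$ (up to an arbitrarily small $\varepsilon$-adjustment when $F_i$ is the orientable double cover of a non-orientable minimal surface), which is strictly less than $\area(D)$ by hypothesis. Hence every connected component $\Sigma$ of $F \cap V$ has $\area(\Sigma) < \area(D)$. Next, I would use incompressibility of $F$ in $M$ together with irreducibility of the negatively curved $M$ to perform standard innermost-disk and outermost-arc isotopies of $F$: any compressing disk for a component $\Sigma \subset F \cap V$ inside $V$ must have boundary bounding a disk in $F$ by incompressibility of $F$, and the resulting sphere bounds a ball in $M$, permitting an isotopy that strictly reduces $|F \cap \bdy V|$. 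After finitely many such reductions, every component of $F \cap V$ is incompressible in $V$.

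The main geometric input is a refinement of the argument for Lemma \ref{lemma:gamma-miss-sfce}. Lift a component $\Sigma$ to $\widetilde{\Sigma} \subset \widetilde{V} \cong D \times \RR$. Because $f\co \widetilde{V} \to D$ is a Riemannian submersion and commutes with the $\ZZ$-action by deck transformations, one has $\area(f(\widetilde{\Sigma})) \leq \area(\Sigma) < \area(D)$, so $f(\widetilde{\Sigma})$ misses an open subset of $D$. This immediately rules out $\Sigma$ being a meridian disk, since a lift of such a disk separates the two ends of $\widetilde{V}$ and so projects surjectively onto $D$. A parallel but more delicate coarea/degree argument, applied to a fundamental domain of $\widetilde{\Sigma}$ under its $q\ZZ$-stabilizer (using that the boundary of a $(p,q)$-cabling annulus wraps $p$ times around the meridian of $\bdy V$), excludes $(p,q)$-cabling annuli with $p \geq 2$. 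By the classification of incompressible surfaces with boundary in the solid torus $V$, every component of $F \cap V$ must therefore be a $\bdy$-parallel annulus.

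To finish, each $\bdy$-parallel annulus $\Sigma_j \subset F \cap V$ cobounds with a sub-annulus $A_j \subset \bdy V$ a trivial $I$-bundle region $\Sigma_j \times I$ inside $V$. Removing the union of these product regions leaves a residual piece $V' \subset V$ containing $\gamma$; $V'$ is a solid torus, and collapsing the product collars exhibits $V'$ as ambient isotopic in $M$ to $V$. The main obstacle will be the cabling-annulus exclusion above: the area-degree bookkeeping that converts ``boundary wraps $p$ times around the meridian'' into the needed lower bound on area is where the submersibility of $V$ does its essential work, and the normalization isotopies must be shown to be compatible with this exclusion (e.g.\ by tracking a suitable complexity of $F \cap V$ throughout).
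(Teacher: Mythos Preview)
Your overall approach is close to the paper's, but there are two issues, one of which you yourself flag as the ``main obstacle.''

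First, the step excluding ``$(p,q)$-cabling annuli with $p \geq 2$'' is entirely unnecessary: every properly embedded incompressible annulus in a solid torus is boundary-parallel, regardless of the slope of its boundary curves. (For instance, if $V = V_1 \cup_A V_2$ with each $V_i$ a solid torus, a van Kampen computation shows that $\pi_1(V) \cong \ZZ$ forces the core of $A$ to generate $\pi_1(V_i)$ for some $i$, so $V_i \cong A \times I$.) Once you have reduced to incompressible pieces and excluded meridian disks, the conclusion that every component of $F \cap V$ is a $\bdy$-parallel annulus is immediate; the ``delicate coarea/degree argument'' you sketch is not needed, and submersibility plays no further role beyond the meridian exclusion. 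This is exactly what the paper does: it simply cites that incompressible annuli in a solid torus are $\bdy$-parallel.

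Second, and more substantively, your order of operations creates a gap. You first isotope $F$ to make $F \cap V$ incompressible in $V$, and only then invoke the area bound $\area(\Sigma) < \area(D)$ to rule out meridian disks. But that inequality was derived from the almost-minimal position of each $F_i$, which your isotopies may have destroyed; after isotopy there is no reason the components of $F \cap V$ still satisfy the bound. The paper avoids this by applying the area argument to $F$ in its original almost-minimal position, examining the curves of $F \cap \bdy V$ directly: if any such curve is a meridian of $V$, an innermost-disk argument (using incompressibility of $F$ and irreducibility of $M$) produces a meridian disk of $V$ lying in $F$, whence $V \setminus F$ can contain no curve essential in $V$ --- contradicting Lemma~\ref{lemma:gamma-miss-sfce} applied to the almost-minimal $F$. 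Only after meridians are thus excluded does the paper perform cleanup isotopies through balls to remove curves of $F \cap \bdy V$ that are trivial on $\bdy V$; the remaining components of $F \cap V$ then have essential boundary and inject on $\pi_1$, hence are annuli, hence are $\bdy$-parallel, and $V'$ falls out as you describe.
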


\begin{proof}
By Lemma \ref{lemma:even-minimal}, each even $F_i$ is isotopic to a
minimal surface, or is the boundary of an $\varepsilon$--neighborhood of a
non-orientable minimal surface. 
Then Lemma \ref{lemma:minimal-surf-area} implies that 
for sufficiently small  $\varepsilon$, we can ensure 
the area of $F_i$ is
bounded above by $ 2\pi \chi(F_i) / \kappa_{\max}+ \delta$, for arbitrarily small $\delta$. In particular,
we have
$$\area(F_i) < \area(D).$$

Consider the curves of intersection $F \cap \bdy V$. If one of these
curves is a meridian of $V$, it must bound a disk in some $F_i$,
because $F_i$ is incompressible. 
Pass to an innermost disk in $F_i$, among all disks whose boundary is
a meridian curve on $\bdy V$. This disk must be a meridian of $V$,
because $M$ is negatively curved, hence irreducible.
But then $V \setminus F$ cannot contain a curve that is essential in
$V$, contradicting Lemma \ref{lemma:gamma-miss-sfce}. Therefore, every
curve of $F \cap \bdy V$ is either trivial in $\bdy V$, or else
essential in $V$.

As a preliminary step for the proof, we isotope trivial curves of
$F\cap \bdy V$ off $V$, through balls, as follows.
If any curve of $F \cap \bdy V$ is trivial in $\bdy V$, at least one
such trivial curve must be innermost in $F$.  This curve must bound a
disk $D_0 \subset F$ and a disjoint disk $D_1 \subset \bdy V$. Because
$M$ is irreducible, we may isotope $D_0$ past $D_1$, to remove this
curve of intersection.  If any trivial curves remain, repeat this
procedure with another innermost curve.  After this sequence of
isotopies, any remaining curves of $F \cap \bdy V$ are essential in
$V$.

Now, consider a component $A_0$ of $F \cap V$ (if any). Since each
component of $F$ is essential in $M$, and every curve of $\bdy A_0$ is
essential in $V$, the component $A_0$ must itself be incompressible in
$V$.  Since $\pi_1(A_0) \hookrightarrow \pi_1(V) \cong \ZZ$ and $F$ is
2--sided, this means $A_0$ is an annulus. As all annuli in a solid
torus are boundary--parallel, $F$ consists entirely of
boundary--parallel annuli.  Each boundary--parallel annulus $A_0$ cuts
off a boundary--parallel solid torus in $V$.

After removing all of these boundary--parallel pieces (if any), we
find a component $V'$ of $V \setminus F$ that is isotopic to $V$
itself.
Now, undo the isotopies through balls that removed the trivial curves
of intersection of $F \cap \bdy V$ in the preliminary step of the
proof.  Each of these isotopies modifies $V'$ by pushing a disk on the
boundary into or out of $V'$.  In particular, these isotopies preserve
the property that the component $V'$ of $V \setminus F$ is isotopic to
$V$ itself.  They also preserve the property that each component of $V
\setminus V'$ is boundary-parallel.
\end{proof}

Recall the thin generalized splitting $\{W_1, \ldots, W_m\}$ of $M$,
with surfaces $F_i = W_i \cap W_{i+1}$.  By Lemma
\ref{lemma:VintF-trivial}, we know that there is a solid torus $V'
\subset V$, which is isotopic to $V$ itself, and is disjoint from
every even surface $F_{2i}$. This means that $V'$ is contained in some
submanifold $W_{2i-1} \cup W_{2i}$, in the complement of the even
surfaces.  For the rest of this section, we take $W$ to be the
connected component of $W_{2i-1} \cup W_{2i}$ containing $V'$.

The following lemma, and its proof, was inspired by \cite[Lemma
2]{breslin}.

\begin{lemma}\label{lemma:breslin-lemma2}
Let $M$ and $V$ be as in Theorem \ref{thm:core-gen-splitting}, and let
$V'$ be as in Lemma \ref{lemma:VintF-trivial}. Let $S \subset
F_{2i-1}$ be a strongly irreducible Heegaard surface for the
submanifold $W$ that contains $V'$.  Then, after an isotopy of $S$,
there is a simple closed curve $\delta \subset S \cap V'$ that is
essential in $V'$.
\end{lemma}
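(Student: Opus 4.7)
The plan is to apply Lemma \ref{lemma:sweepout} to obtain a sweepout of $W$ by surfaces isotopic to $S$ of area strictly less than $\area(D)$, and then combine this with Lemma \ref{lemma:gamma-miss-sfce} and the strong irreducibility of $S$ to locate a level surface whose intersection with $V'$ contains a curve essential in $V'$.

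First, I verify that $W$ satisfies the hypotheses of Lemma \ref{lemma:sweepout}: it inherits the sectional curvature bound $\kappa_{\max}$ from $M$, and by Lemma \ref{lemma:even-minimal} its boundary components may be arranged to be (almost) minimal. Since $S$ is strongly irreducible in $W$ and the inequality \eqref{eq:core-gen-splitting} is strict, I choose $\zeta$ close enough to $1$ to obtain a sweepout $\{\Sigma_t\}_{t \in [-1,1]}$ of $W$ with $\area(\Sigma_t) < \area(D)$ for every $t \in (-1,1)$. After a small perturbation, $\Sigma_t$ is transverse to $\bdy V'$ for all but finitely many $t$, and at each generic $t$ Lemma \ref{lemma:gamma-miss-sfce} supplies a component of $V' \setminus \Sigma_t$ containing a simple closed curve essential in $V'$.

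Next, assign to each generic $t$ a label $L(t) \subseteq \{-, +\}$ recording which side(s) of $\Sigma_t$ carry such an essential component, with ``$-$'' for the $C_1$ side and ``$+$'' for the $C_2$ side in the Heegaard splitting $W = C_1 \cup_S C_2$. Near $t = -1$, $\Sigma_t$ collapses toward a spine of $C_1$; provided no loop of this spine lies essentially in $V'$ (a case in which the conclusion would already follow by a direct push-off of such a loop to $\bdy_+ C_1 = S$), the $C_1$ side component of $V' \setminus \Sigma_t$ carries no essential curve, so $L(t) = \{+\}$ for $t$ near $-1$. Symmetrically, $L(t) = \{-\}$ for $t$ near $+1$. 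Because $L(t)$ is always nonempty and changes from $\{+\}$ to $\{-\}$ across the interval, tracking this label across the finitely many non-transverse levels, where $L$ can only jump via a Morse-type saddle move in $\Sigma_t \cap \bdy V'$, forces the existence of some $t_0$ at which essential components occur on both sides of $\Sigma_{t_0}$ in $V'$.

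At this $t_0$, I argue by contradiction. If every simple closed curve of $\Sigma_{t_0} \cap V'$ were inessential in $V'$, then each would bound a disk in $V'$, and a preliminary innermost-disk isotopy, using the irreducibility of $M$, would reduce to the case where each such curve is also essential on $\Sigma_{t_0}$. Innermost disks in $V'$ would then be disjoint compressing disks for $\Sigma_{t_0}$, and the essential components on both sides of $\Sigma_{t_0}$ would guarantee such innermost disks on \emph{each} side---yielding two disjoint compressing disks on opposite sides of $\Sigma_{t_0}$, contradicting the strong irreducibility of $S$. Hence some component $\delta \subset \Sigma_{t_0} \cap V'$ is essential in $V'$, and isotoping $S$ to $\Sigma_{t_0}$ along the sweepout completes the proof. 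The main obstacle is this last step: carefully extracting disjoint compressing disks on opposite sides of $\Sigma_{t_0}$ from the two-sided essential components, which requires ensuring that the innermost-disk construction produces genuine compressions and can be performed simultaneously on both sides.
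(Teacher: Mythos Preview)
Your overall strategy matches the paper's: invoke Lemma~\ref{lemma:sweepout} to get a bounded-area sweepout, apply Lemma~\ref{lemma:gamma-miss-sfce} at each level, and use a connectedness/labeling argument to locate a level $t_0$ at which both sides of the sweepout surface carry curves essential in $V'$. One small slip: Lemma~\ref{lemma:gamma-miss-sfce} is stated for $V$, not $V'$, so it only gives you an essential curve in $V\setminus\Sigma_t$. The paper handles the passage to $V'$ in a separate claim, using that $V\setminus V'$ is a union of trivial $I$--bundles over subsurfaces of $\partial V$ (Lemma~\ref{lemma:VintF-trivial}), so the essential curve can be pushed into $V'$ through the interior of $V$. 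You should not skip this.

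The substantive divergence is in the final step, and here there is a genuine gap that you yourself flag. The paper does \emph{not} use strong irreducibility at this point. Instead, having found essential loops $\alpha_H\subset H_r\cap V'$ and $\alpha_J\subset J_r\cap V'$, it observes that suitable powers $(\alpha_H)^n$ and $(\alpha_J)^m$ are homotopic in $V'$, hence cobound an immersed annulus $A\subset V'$. Since $A$ runs from one side of $S_r$ to the other, some circle in the preimage of $S_r$ is essential in the domain annulus, and its image is a loop on $S_r\cap V'$ that is essential in $V'$. Passing to an embedded sub-loop finishes the argument.

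Your proposed route---producing disjoint compressing disks for $\Sigma_{t_0}$ on both sides inside $V'$ and then invoking strong irreducibility---is not justified. The curves you would use as $\partial D$ are components of $\Sigma_{t_0}\cap\partial V'$ (or innermost curves on meridian disks), and nothing in your setup controls which side of $\Sigma_{t_0}$ the resulting disks lie on; they could all be on the same side. Worse, if $\Sigma_{t_0}\cap V'$ happens to consist entirely of disks, there are no compressing disks for $\Sigma_{t_0}$ inside $V'$ at all, and your argument produces nothing to contradict. The paper's annulus argument sidesteps this completely and is both shorter and more direct.
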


\begin{proof}
By Equation \eqref{eq:core-gen-splitting}, the cross--sectional disk
$D$ of $V$ satisfies the strict inequality $\area(D) > 2 \pi \chi (S)
/ \kappa_{\max}$. Thus, for some $\zeta \in (0,1)$ near $1$, we have
$$
\area(D) > 2\pi\chi(S)/(\zeta \kappa_{\max}).
$$
For this value of $\zeta$, Lemma \ref{lemma:sweepout} implies that
there exists a sweepout $f\co W \to [-1,1]$ corresponding to $S$, such
that every level surface $S_t$ satisfies
\begin{equation}\label{eq:st-area-bound}
\area(S_t) \: \leq \:  2\pi \chi(\Sigma) / (\zeta\kappa_{\max} ) \:
< \: \area(D). 
\end{equation}

Let $V' \subset V \cap W$ be the solid torus guaranteed by Lemma
\ref{lemma:VintF-trivial}, which is isotopic to $V$.

\begin{claim}\label{claim:component-each-t}
For every $t \in (-1, 1)$, some component of $V' \setminus S_t$
contains a closed curve that is essential in $V'$.
\end{claim}

\begin{proof}[Proof of claim]
By Equation \eqref{eq:st-area-bound} and Lemma
\ref{lemma:gamma-miss-sfce}, we know that some component of $V
\setminus S_t$ contains a closed curve that is essential in $V$. What
needs to be shown is that this curve can be taken to lie in $V'$.

Let $G_t = S_t \cap V'$.  Since $\bdy V'$ consists of sub-surfaces of
$\bdy V$ and sub-surfaces of $F = F_2 \cup F_4 \cup \ldots \cup
F_{m-2}$, and since $S_t$ is disjoint from $F$, we know that $\bdy G_t
\subset \bdy V$, i.e.\ $G_t$ is properly embedded in $V$.

Since $V$ contains an essential closed curve in the complement of
$S_t$, it also contains such a curve in the complement of
$G_t$. Furthermore, by Lemma \ref{lemma:VintF-trivial}, $V \setminus
V'$ consists of trivial $I$--bundles over subsurfaces of $\bdy V$.
Thus we may homotope the essential closed curve through the interior
of $V$, away from these boundary--parallel pieces and into $V'$.
\end{proof}

For each $t \in (-1,1)$, let $H_t = f(S \times [-1,t)) \subset W$ and
  $J_t = f(S \times (t,1]) \subset W$.  Note the closure of $H_t$ is
the compression body below $S_t$, and the closure of $J_t$ is the
compression body above $S_t$.  Define
\begin{eqnarray*}
E_H := \{ t \in (-1,1) &:& H_t \cap V^\circ \mbox{ contains a closed curve that is essential in $V'$} \}, \\
E_J := \{ t \in (-1,1) &:& J_t \cap V^\circ \mbox{ contains a closed curve that is essential in $V'$} \}. 
\end{eqnarray*}
By Claim \ref{claim:component-each-t}, we know that every $t \in (-1,1)$ is
contained in either $E_H$ or $E_J$.

Since each of $H_t \cap V^\circ$ and $J_t \cap V^\circ$ is open in
$W$, we know that both $E_H$ and $E_J$ are open sets.  Furthermore,
for $ t$ close to $-1$, $H_t$ is a small regular neighborhood of a
spine of $W_{2i-1}$, hence these values of $t$ must be contained in
$E_J$.  Similarly, for $ t$ close to $1$, $J_t$ is a small regular
neighborhood of a spine of $W_{2i}$, hence these values of $t$ must be
contained in $E_H$.  Since both $E_H$ and $E_J$ are open and
non-empty, and their union $(-1,1)$ is connected, it follows that $E_H
\cap E_J \neq \emptyset$.

Let $r \in E_H \cap E_J$. Then, by construction, $H_r \cap V^\circ$
contains an essential loop that we call $\alpha_H$, and $J_r \cap
V^\circ$ contains an essential loop that we call $\alpha_J$.  There
exist integers $n,m$ so that $(\alpha_H)^n$ is homotopic to
$(\alpha_J)^m$ in $V$, and thus there is an immersed annulus $A$ in
$V'$ with boundary components $(\alpha_H)^n$ and $(\alpha_J)^m$.  This
implies that some loop $A \cap S_r$ is non-trivial in $A$, and hence,
by taking a sub-loop, if necessary, there exists an embedded essential
loop in $A \cap S_r$, which must therefore be essential in $V'$.
Since the sweepout surface $S_r$ is isotopic to $S$, we are done.
\end{proof}

The rest of the proof follows from two lemmas, whose topological
proofs are due to Breslin.  We need only check that his proofs carry
through in our setting.  The first is \cite[Lemma 1]{breslin}.

\begin{lemma}\label{lemma:breslin-lemma1}
Let the submanifold $W$ and the Heegaard surface $S \subset F_{2i-1}$
be as in Lemma \ref{lemma:breslin-lemma2}. Let $\gamma$ be the core
curve of $V$, as in Theorem \ref{thm:core-gen-splitting}.
Then, possibly after isotoping $\gamma$,
there is a regular neighborhood $N(\gamma)$ and an embedded
annulus $A$ in $W \setminus N(\gamma)$ with $\partial A = \alpha \cup
\alpha'$, where $\alpha$ is a simple essential nonmeridional loop in
the boundary of $N(\gamma)$ and $\alpha' \subset S$.
\end{lemma}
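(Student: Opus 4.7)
The plan is to adapt Breslin's argument in \cite[Lemma 1]{breslin} to our setting with only mild adjustments. By Lemma \ref{lemma:breslin-lemma2}, we have (after isotoping $S$) a simple closed curve $\delta \subset S \cap V'$ that is essential in the solid torus $V'$. Since $V'$ is isotopic to $V$ with core $\gamma$ and $\pi_1(V') \cong \ZZ$, the loop $\delta$ is freely homotopic in $V'$ to $\gamma^n$ for some nonzero integer $n$. I would begin by isotoping $\gamma$ along the core of $V'$ and shrinking the tubular neighborhood $N(\gamma)$ so that $N(\gamma) \subset V'$ is disjoint from $S$; this is possible since $S \cap V'$ is properly embedded in $V'$ and a generic core curve avoids it.

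The homotopy $\delta \simeq \gamma^n$ provides a continuous map of an annulus into $\overline{V' \setminus N(\gamma)}$ whose boundary components map to $\delta \subset S$ and to $\gamma^n \subset \partial N(\gamma)$. The key step is to upgrade this singular annulus to an embedded one. Following Breslin, I would perturb the map into general position with respect to $S$ and to itself, and then carry out a finite sequence of innermost-disk and outermost-arc surgeries inside the thickened torus $\overline{V' \setminus N(\gamma)}$. Each surgery strictly simplifies the intersection pattern, and after finitely many moves the output is an embedded annulus $A \subset W \setminus N(\gamma)$ with $\partial A = \alpha' \cup \alpha$, where $\alpha' \subset S$ and $\alpha \subset \partial N(\gamma)$. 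Irreducibility of $M$, which follows from the negative curvature hypothesis, guarantees that any $2$--spheres produced during the surgery bound balls and can be discarded.

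Finally, I would verify the three adjectives on $\alpha$. Simplicity is automatic from embeddedness. In an annulus the two boundary circles are freely homotopic, so $\alpha$ and $\alpha'$ represent the same class in $\pi_1(V) \cong \ZZ$; by tracking the surgery carefully one ensures that this class remains nontrivial (inherited from $\gamma^n$), which simultaneously rules out $\alpha$ being inessential on $\partial N(\gamma)$ and $\alpha$ being a meridian of $N(\gamma)$. The hard part, and the heart of Breslin's original argument, is precisely this last book-keeping step: a priori the surgeries could discard the component of the singular annulus that carries the essential homotopy class. One must check that at each stage an embedded sub-annulus representing a nonzero power of $\gamma$ survives. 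This argument goes through in our setting with no essential change, because the relevant topology is local to $V$ and does not see the generalized splitting structure of $M$; the role of $W$ is only to provide an ambient irreducible piece in which the surgeries can be performed.
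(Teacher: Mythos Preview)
Your plan does not follow Breslin's argument; it proposes a different one. Breslin's proof of \cite[Lemma 1]{breslin}, and the paper's adaptation, proceed via the Rubinstein--Scharlemann graphic: one introduces a second sweepout $g\colon V'\to[0,1]$ of the solid torus, uses Cerf theory to compare it with the Heegaard sweepout $f$ of $W$, and applies \cite[Lemma 4]{breslin} (which rests on Scharlemann's No Nesting Lemma) to locate a level $S_t$ for which either $S_t\cap\partial V'$ already contains an essential nonmeridional loop, or $S_t\cap V'$ contains no loop essential in $S_t$. The first case yields the annulus immediately as a short product collar between $\partial V'$ and $\partial N(\gamma)$; the second is reduced to the first by comparing $S_t$ with the sweepout level $S_r$ supplied by Lemma \ref{lemma:breslin-lemma2}. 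No singular-annulus surgery appears anywhere.

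Your surgery route has a concrete obstruction. Even granting that $N(\gamma)$ can be made disjoint from $S$ (this is not a genericity statement---a curve and a surface in a $3$--manifold generically meet in points---and needs its own argument), you then work in $\overline{V'\setminus N(\gamma)}\cong T^2\times I$ and attempt to promote the singular annulus from $\delta$ to an embedded one there. But the essential simple loop $\delta\subset S\cap V'$ produced by Lemma \ref{lemma:breslin-lemma2} may be locally knotted in $V'$: take, for instance, the core connect-summed with a small non-torus knot contained in a ball in $V'$. Such a $\delta$ is homotopic to $\gamma$ yet cobounds no embedded annulus with any curve on $\partial N(\gamma)$ inside $V'$, since such an annulus would make $\delta$ isotopic in $V'$ to a curve on the concentric torus $\partial N(\gamma)$, which a locally knotted curve is not. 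Innermost-disk and outermost-arc moves on a mapped-in annulus cannot manufacture an embedded annulus that does not exist. The graphic argument avoids this trap precisely because it moves $S$ itself and produces the desired loop directly on $\partial V'$, rather than trying to drag a fixed $\delta$ down to $\partial N(\gamma)$.
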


\begin{proof}
By Lemma \ref{lemma:breslin-lemma2}, we may isotope $S$ so that $S
\cap V'$ contains a simple loop $\alpha$ that is essential in $V'$,
where $V' \subset V$ is the solid torus of Lemma
\ref{lemma:VintF-trivial}.
Recall the sweepout $f\co W \to [-1,1]$, which was constructed in the
proof of Lemma \ref{lemma:breslin-lemma2}.  This map has the property
that $f^{-1}(t)= S_t$ is isotopic to $S$ for each $t \in (-1,1)$,
while the sets $f^{-1}(\pm 1)$ consist of the spines of the two
compression bodies.

The next step is to use the Rubinstein--Scharlemann graphic.  For
closed manifolds, the graphic is defined in
\cite{rubinstein-scharlemann:graphic1}.  However, we need to use the
Rubinstein--Scharlemann graphic for manifolds with boundary, as in
\cite{rubinstein-scharlemann:graphic2}.  In particular, one
modification is that we only consider the part of the sweepout that
runs from $S_{-1+\epsilon}$ to $S_{(1-\epsilon)}$, for some
$\epsilon>0$ sufficiently small. This way, we avoid problems as the
sweepout meets the boundary of $W$.

Let $g\co V' \to [0,1]$ be a smooth function that gives a sweepout of
the solid torus.  That is, for all $s \in (0,1)$, $g^{-1}(s)$ is a
surface isotopic to $\partial V'$, while $g^{-1}(1) = \partial V'$,
and $g^{-1}(0)$ is a closed curve isotopic to $\gamma$, the core of $V$.  Isotope
$\gamma$ to be $g^{-1}(0)$.
We will consider the function $g_t = g |_{S_t\cap V'}$, and apply work
of Cerf \cite{cerf} to isotope $f$ and $g$ so that $g_t$ is Morse for
all but finitely many $t$, and near--Morse otherwise.  The
Rubinstein--Scharlemann graphic $G$ is the set of points $(t, s) \in
[-1,1] \times [0,1]$ such that $s$ is a critical value of $g_t$.
While care must be taken for applications of the graphic for manifolds
with boundary (compare \cite{rubinstein-scharlemann:graphic1} and
\cite{rubinstein-scharlemann:graphic2}), to apply Breslin's proof we
only need the result from Cerf theory that if $(t_1, s_1)$ and $(t_2,
s_2)$ are in the same component of
$([-1+\epsilon,1-\epsilon]\times[0,1])\setminus G$, then the surface
$S_{t_1}$ is isotopic to $S_{t_2}$ via an isotopy that takes the loops
in $g^{-1}_{t_1}(s_1)$ to the loops in $g_{t_2}^{-1}(s_2)$.

Using the graphic described above, we obtain \cite[Lemma
  4]{breslin}. According to that lemma, one of two conclusions must
hold for some $t \in (-1,1)$:
\begin{enumerate}
\item \label{item:essential} $S_t \cap \partial V'$ contains a loop
  that is essential and non-meridional on $\partial V'$, or
\item \label{item:non-ess} $S_t \cap V'$ does not contain an essential
  loop of $S_t$.
\end{enumerate}

The proof of \cite[Lemma 4]{breslin} works verbatim with the slight
modification to the sweepout that was mentioned above.  In addition to
the Rubinstein--Scharlemann graphic, it uses Scharlemann's No Nesting
Lemma \cite{scharlemann:no-nesting}.  Note that this lemma applies
equally well to 3--manifolds with and without boundary.

In case \eqref{item:essential} above, we are essentially done with the
proof of Lemma \ref{lemma:breslin-lemma1}, as follows. Let $N(\gamma) =
g^{-1}[0,1-\epsilon]$ be a solid torus slightly inside $V'$. 
Let
$\alpha' \subset S_t \cap g^{-1}(1)$ be the loop guaranteed by
conclusion \eqref{item:essential}, and let $\alpha$ be the projection
of $\alpha'$ to $\bdy N(\gamma) = g^{-1}(1-\epsilon)$. Then the
product annulus between $\alpha$ and $\alpha'$ satisfies the lemma.

In case \eqref{item:non-ess} above, there is a $t \in (-1,1)$ so that
$S_t \cap V'$ does not contain an essential loop of $S_t$, and thus
each loop in $S_t \cap \partial V'$ bounds a disk in $S_t$.  By Lemma
\ref{lemma:breslin-lemma2}, another sweepout surface $S_r$ has the
property that $S_r \cap V'$ contains a simple loop $\alpha_r$ that is
essential in $V'$.  Since $S_r$ and $S_t$ are isotopic, there is an
embedded annulus $A$ with one boundary component equal to $\alpha_r
\subset S_r$ and the other equal to some $\alpha_t \subset S_t$.

Because all loops of $S_t \cap \bdy V'$ bound disks on $S_t$, we may
isotope $A$ to avoid these disks, hence $\alpha_t$ is disjoint from
$S_t \cap \bdy V'$.  The loop $\alpha_t$ must therefore be contained
in the interior of $V'$ or disjoint from $V'$.  Because $S_t \cap V'$
contains no essential loop of $S_t$, and because $\alpha_t$ is
isotopic to the essential loop $\alpha_r$, we must have $\alpha_t$ in
$W \setminus V'$.  Isotope $A$ slightly, if necessary, so that
$\alpha_r$ is contained in the interior of $V'$.  Then $A$ is an
annulus embedded in $W$ with $\bdy A = \alpha_r \cup \alpha_t$ with
$\alpha_r \subset V'$ and $\alpha_t \subset (W\setminus V')$.  Thus
$A$ meets $\bdy V'$ in an essential loop $\alpha'_t$.  Use the
embedded annulus bounded by $\alpha_t$ and $\alpha'_t$ to isotope
$S_t$ so that $S_t \cap V'$ contains a loop that is essential and
nonmeridional in $\bdy V'$. This puts us back into case
\eqref{item:essential}, hence the proof is complete.
\end{proof}

We have seen that there is an embedded annulus in $W$ with one
boundary component on a neighborhood of $\gamma$ and one on our
Heegaard surface $S = F_{2i-1}$.  This annulus must wrap around some
power of $\gamma$.  The final step toward proving Theorem
\ref{thm:core-gen-splitting} is to show that in fact, we may take the
annulus to wrap exactly once around $\gamma$.

\begin{lemma}\label{lemma:breslin-lemma5}
Let $W$ be a 3--manifold with Heegaard surface $S$, and let $\gamma$
be a simple loop in $W$ with regular neighborhood $N(\gamma)$.  Let
$\alpha$ be an essential nonmeridional loop in $N(\gamma)$, and
$\alpha'$ a loop in $S$.  If there is an embedded annulus $A \subset
W$ disjoint from $N(\gamma)$, with boundary $\alpha \cup \alpha'$,
then $\gamma$ is isotopic into $S$.
\end{lemma}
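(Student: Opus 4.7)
The plan is to follow Breslin's topological argument in \cite[Lemma 5]{breslin}, which uses no geometric hypotheses on $W$.

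First, I would isotope the annulus $A$ into general position with $S$ and clean up the intersection $A\cap S$. Any circle of $A\cap S$ that is inessential in $A$ or in $S$ can be removed via standard innermost-disk arguments, using that each of the two compression bodies flanking $S$ is irreducible (any $2$--sphere in a compression body bounds a ball). Circles of intersection that are essential on the annulus $A$ are parallel to $\partial A$; replacing $\alpha'$ by the component of $A\cap S$ closest to $\alpha$ along $A$ reduces us to the case $A\cap S = \alpha'$. After this cleanup, $A$ lies entirely in one of the two compression bodies flanking $S$, call it $C$, and the solid torus $N(\gamma)$ lies in the same $C$.

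Next I would form the solid torus $N^{*} = N(\gamma) \cup N(A)$, where $N(A)$ is a thin regular neighborhood of $A$ inside $C$. Writing $[\alpha] = p\cdot [\gamma]$ in $\pi_{1}(N(\gamma)) \cong \mathbb{Z}$ for the nonzero longitudinal coefficient $p$ of the nonmeridional curve $\alpha$, a Seifert--van Kampen computation yields $\pi_{1}(N^{*}) \cong \mathbb{Z}$, generated by $[\gamma]$. A direct check shows that $\partial N^{*}$ is a single torus, obtained by gluing the two annuli $\partial N(\gamma)\setminus(\text{nbhd of }\alpha)$ and $\partial N(A)\setminus(\text{nbhd of }\alpha)$ along their boundary circles. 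Thus $N^{*}$ is a solid torus whose core is isotopic to $\gamma$, and $\partial N^{*}$ meets $S$ in an annular neighborhood of $\alpha'$.

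To finish, I would exhibit an ambient isotopy pushing the core $\gamma$ of $N^{*}$ onto the annular band $\partial N^{*}\cap S \subset S$. This is the main obstacle: one must argue that this band contains a longitude of $N^{*}$, so that $\gamma$, which is isotopic to any longitude of $N^{*}$, can be realized as a curve lying on $S$. I expect this to follow from the embeddedness of $A$ in the exterior of $N(\gamma)$ together with the fact that $\alpha$ is a simple nonmeridional curve on $\partial N(\gamma)$, which together rule out the $(p,q)$--cabling behavior that would otherwise obstruct the isotopy. This is precisely where I would invoke the detailed argument of \cite[Lemma 5]{breslin} verbatim.
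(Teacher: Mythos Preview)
The paper's treatment of this lemma is simply a citation: it observes that Breslin's proof of \cite[Lemma~5]{breslin} applies verbatim, and notes that the key ingredient is a thin--position argument of Schultens \cite{schultens}. Your sketch is a different strategy, and it has a genuine gap.

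The failure is in the last step. Write $[\alpha]=p[\gamma]$ in $\pi_1(N(\gamma))\cong\ZZ$, so $p\neq 0$ since $\alpha$ is nonmeridional. Your solid torus $N^{*}=N(\gamma)\cup N(A)$ is as you describe, and $\gamma$ is indeed a core of it (one can extend the meridian disk of $N(\gamma)$ by $p$ disjoint rectangles in $N(A)$ to obtain a meridian disk of $N^{*}$ meeting $\gamma$ once). But the band $\partial N^{*}\cap S$ has core curve $\alpha'$, and $[\alpha']=[\alpha]=p[\gamma]$ in $\pi_1(N^{*})$, so $\alpha'$ is a $(p,q')$--curve on $\partial N^{*}$. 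When $\lvert p\rvert\geq 2$ this band contains no longitude of $N^{*}$, and the hypotheses do not exclude $\lvert p\rvert\geq 2$: ``essential nonmeridional'' allows $\alpha$ to be, say, a $(2,1)$--cable of $\gamma$. Your expectation that embeddedness of $A$ rules out cabling is therefore incorrect. You then propose to invoke Breslin's argument ``verbatim'' to finish, but Breslin's argument is the Schultens--style thin--position argument for $\gamma$ relative to the sweepout, not a patch for your solid--torus picture; invoking it at that point amounts to citing the lemma you are meant to prove.

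There is also a smaller gap earlier: after your cleanup of $A\cap S$ you assert $N(\gamma)\subset C$, but nothing yet prevents $\gamma$ itself from meeting $S$; arranging a good position for $\gamma$ relative to $S$ is part of the content, and is exactly what the thin--position machinery supplies.
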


\begin{proof}
This is \cite[Lemma 5]{breslin}, and Breslin's proof goes through
verbatim.  We note that his proof uses a thin position argument of
Schultens \cite{schultens}, modified slightly by Breslin.  The
argument holds for manifolds with or without boundary, and indeed
Schultens' original application concerned manifolds with boundary.
Hence we refer the reader to \cite[Lemma 5]{breslin} for the proof.
\end{proof}

The referee informs us that Lemma \ref{lemma:breslin-lemma5} can also be 
proved via a straightforward application of the Daisy Lemma \cite{jaco-rubinstein:layered}.

\begin{proof}[Proof of Theorem \ref{thm:core-gen-splitting}]
Recall that $S \subset F_{2i-1}$ is a component of one of the odd
surfaces in a generalized Heegaard splitting of $M$. Now, the theorem
follows by Lemmas \ref{lemma:breslin-lemma1} and
\ref{lemma:breslin-lemma5}.
\end{proof}

The above results also give a quick proof of Theorem
\ref{thm:intro-breslin}, which was stated in the introduction.

\begin{proof}[Proof of Theorem \ref{thm:intro-breslin}]
Let $\Sigma$ be a genus $g$ Heegaard surface of $M$. If $\Sigma$ is
irreducible, then, as described in Section \ref{sec:sweepouts}, we may
untelescope $\Sigma$ to a thin generalized Heegaard splitting $\{W_1,
\ldots, W_m \}$. By property \eqref{item:euler} of thinness, the
cross-sectional disk $D$ of the submersible solid torus $V$ satisfies
$$
\area(D) \: > \: \frac{2\pi \chi(\Sigma) }{ \kmax} \: \geq \:
\frac{2\pi \, \chi(F_i)} {\kappa_{\max}} \quad \forall i.
$$
Thus, by Theorem \ref{thm:core-gen-splitting}, the core curve
$\gamma$ of $V$ is isotopic into some odd surface $F_{2i-1}$.
 
Now, we assume $\gamma \subset F_{2i-1}$, and amalgamate the
generalized Heegaard splitting to recover $\Sigma$. At certain times
during the amalgamation process, we will need to attach handles to a
partially amalgamated surface containing $\gamma$. Each time we do
this, a small isotopy ensures that $\gamma$ is disjoint from the disks
along which we attach handles. Thus, after the amalgamation is
complete, we have $\gamma \subset \Sigma$.

Meanwhile, if $\Sigma$ is reducible, then we destabilize $\Sigma$ to
an irreducible Heegaard surface $\Sigma'$ of genus $h < g$, apply the
above argument to $\Sigma'$, and then stabilize back to genus
$g$. Since stabilizations are unique, this approach recovers the
desired result for $\Sigma$.
\end{proof}

\section{Assembling the pieces}\label{sec:assembly}

In this section, we complete the proof of Theorem \ref{thm:main}. 
The following lemma will permit us to apply the results of Section \ref{sec:breslin}.

\begin{lemma}\label{lemma:area-D}
Let $C_1, \ldots, C_k$ be disjoint cusp neighborhoods of a hyperbolic
$3$--manifold $X$, and let $s_i$ be a slope on cusp $C_i$. Suppose
that the shortest slope length is $\lmin > 2\pi(2g-1)$, for some
$g\geq 2$.  Then the Dehn filling $M = X(s_1, \ldots, s_k)$ admits a
negatively curved metric as in Theorem
\ref{thm:negcurv-metric}, with curvatures bounded by $\kmax < 0$, such
that the area of a cross--sectional disk of every Dehn filling solid
torus satisfies
$$\area(D) > \frac{2\pi (2g-2)}{|\kmax|}.$$
\end{lemma}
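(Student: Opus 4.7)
The plan is to apply Theorem~\ref{thm:negcurv-metric} with a suitably chosen parameter $\zeta \in (0,1)$, and then verify that the curvature bound \eqref{item:sec} and the area bound \eqref{item:merid} combine to give the desired inequality once $\zeta$ is close enough to $1$.

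More concretely, Theorem~\ref{thm:negcurv-metric} supplies, for any $\zeta \in (0,1)$, a negatively curved metric on $M$ whose curvature satisfies
$$
\kmax \;=\; \zeta\!\left(\left(\tfrac{2\pi}{\lmin}\right)^{\!2} - 1\right),
\qquad \text{so that}\qquad
|\kmax| \;=\; \zeta \cdot \frac{(\lmin-2\pi)(\lmin+2\pi)}{\lmin^2},
$$
and whose meridian disks satisfy $\area(D) \geq \zeta \cdot \lmin^2/(\lmin+2\pi)$. Substituting these into the target inequality $\area(D) > 2\pi(2g-2)/|\kmax|$ and simplifying, the statement reduces to the single scalar inequality
$$
\zeta^{2}\,(\lmin-2\pi) \;>\; 2\pi(2g-2).
$$
The hypothesis $\lmin > 2\pi(2g-1)$ says precisely that $\lmin-2\pi > 2\pi(2g-2)$, so the right-hand side is strictly less than $\lmin-2\pi$. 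Hence for $\zeta$ sufficiently close to $1$, the inequality above holds strictly.

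The only step that requires any care is the algebraic reduction: one must keep track of signs (noting that $|\kmax|>0$ precisely because $\lmin > 2\pi$, which is already part of the hypothesis of Theorem~\ref{thm:negcurv-metric}) and observe that the $\lmin^{2}/(\lmin+2\pi)$ factor cancels on both sides. I don't anticipate any genuine obstacle; the lemma is essentially a bookkeeping consequence of Theorem~\ref{thm:negcurv-metric} together with the numerical slack in $\lmin > 2\pi(2g-1)$, and the proof ends by fixing any such $\zeta$ and invoking Theorem~\ref{thm:negcurv-metric} with that choice.
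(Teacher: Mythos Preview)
Your proposal is correct and follows essentially the same route as the paper: apply Theorem~\ref{thm:negcurv-metric}, set $\kmax = \zeta\bigl((2\pi/\lmin)^2 - 1\bigr)$, and reduce the desired inequality to $\zeta^{2}(\lmin - 2\pi) > 2\pi(2g-2)$, which holds for $\zeta$ close to $1$ because $\lmin > 2\pi(2g-1)$ is strict. The paper's proof rewrites this same condition as $\lmin > (2\pi/\zeta^2)(2g-2+\zeta^2)$, but the algebra and the idea are identical.
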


\begin{proof}
If $\lmin > 2\pi(2g-1)$, then Theorem \ref{thm:negcurv-metric}
implies that for every $\zeta \in (0,1)$, the Dehn filled manifold
$M$ admits a Riemannian metric with sectional curvatures bounded by
$\kappa_{\max} = \zeta ( 4\pi^2/\lmin^2 - 1)$, and with
cross--sectional disks of area at least $\zeta \lmin^2/ (\lmin +
2\pi)$, where 
$$\lmin > 2\pi(2g-1).$$
Since the above inequality is strict, we may find $\zeta$ near $1$
such that $\lmin > (2\pi/\zeta^2)( 2g -2 + \zeta^2).$ Select this
value of $\zeta$ for the application of Theorem
\ref{thm:negcurv-metric}.

Then for any cross sectional disk $D$,
\begin{eqnarray*}
  \area(D) & \geq & \frac{\zeta \lmin^2}{\lmin + 2\pi} \: = \:
  \frac{\zeta(\lmin - 2\pi)}{1 - 4\pi^2/\lmin^2} \: \geq \:  \frac{\zeta^2(
    \lmin - 2\pi)}{|\kmax|} \\
  & > & \frac{\zeta^2\left( \frac{2\pi}{\zeta^2}(2g -2 +
    \zeta^2)- 2\pi \right)}{|\kmax|} \: = \: \frac{2\pi(2g-2)}{|\kmax|}. 
\end{eqnarray*}

\vspace{-3ex}
\end{proof}

\begin{proof}[Proof of Theorem \ref{thm:main}]
Let $\Sigma$ be a Heegaard surface of genus $g$ for the Dehn filled
manifold $M=X(s_1, \ldots, s_k)$.  
By Theorem \ref{thm:negcurv-metric}, $M$
admits a metric with sectional curvatures bounded by $\kmax < 0$, which means $g \geq 2$. By Lemma \ref{lemma:area-D}, the area of a cross--sectional disk
of each Dehn filling solid torus satisfies $\area(D) >
2\pi\chi(\Sigma)/\kmax$. Note that the solid tori constructed in Theorem \ref{thm:negcurv-metric} are submersible, as desired. Thus, by Theorem \ref{thm:intro-breslin}, we conclude that each core $\gamma_i$ of the $i^{\mathrm th}$ solid torus is isotopic into $\Sigma$, as required for conclusion \eqref{i:core-in-surf}.

We will prove conclusion \eqref{i:surf-from-unfilled} by induction on
$k$.  That is, let $\Sigma$ be a Heegaard surface for $M = X(s_1,
\ldots, s_k)$.  In the following argument, we will show that the core
$\gamma_k$ of the $k^{\mathrm{th}}$ solid torus can be isotoped off
$\Sigma$ in such a way that $\Sigma$ becomes a Heegaard surface for $M
\setminus \gamma_k \cong X(s_1, \ldots, s_{k-1})$.  This argument
works for arbitrary $k$.  Hence by induction, $\Sigma$ becomes a
Heegaard surface for $X$.

In the following argument, we may also assume without loss of generality that $\Sigma$ is irreducible. Otherwise, as in the proof of Theorem \ref{thm:intro-breslin}, we simply destabilize $\Sigma$ to an irreducible surface $\Sigma'$, apply the argument to $\Sigma'$, and stabilize at the end to recover $\Sigma$.

\smallskip

With these preliminaries out of the way, we assume that $\Sigma$ is irreducible in $X(s_1, \ldots, s_k)$, and untelescope $\Sigma$ to a thin generalized Heegaard splitting $\{W_1, \ldots, W_m \}$. Let $V = V_k$ be the $k^{\mathrm th}$ Dehn filling solid torus, with the negatively curved metric of Theorem \ref{thm:negcurv-metric}. By Lemma \ref{lemma:area-D}, the cross-sectional disk $D$ of $V$ satisfies 
$$
    \area(D) \: > \: \frac{2\pi \chi(\Sigma) }{ \kmax} \: \geq \:  \frac{2\pi \, \chi(F_i)} {\kappa_{\max}},
$$
for each surface $F_i$ of the generalized splitting. Thus, by Theorem \ref{thm:core-gen-splitting}, the core curve $\gamma_k$ of $V = V_k$ is isotopic into some odd surface $ F_{2i-1}$. Recall that $S = F_{2i-1}$ is a strongly irreducible Heegaard surface of the submanifold $W = W_{2i-1} \cup W_{2i}$.

After $\gamma_k$ has been isotoped into $S$, consider the surface $S
\setminus N(\gamma_k) \subset W$, where $N(\gamma_k)$ is a small
tubular neighborhood of $\gamma_k$ contained in $W$.
Note $S \setminus V_k \subset W\setminus V_k$.  Let $\{D_1, \ldots,
D_n\}$ be a collection of disjoint, non-parallel compression disks for
$S \setminus N(\gamma_k)$ in $W$,
which is maximal with respect to
inclusion.  Since $S$ is strongly irreducible, all of the $D_j$ must
be contained in the same compression body, say $W_{2i-1}$. Let $S'
\subset W_{2i-1}$ be the surface obtained after compressing
$S \setminus N(\gamma_k)$
along all of the $D_j$.

\begin{claim}\label{claim:totally-compressible}
Each component of $S'$ is either a sphere, a closed surface parallel
to $\bdy_- W_{2i-1}$, or an annulus parallel to 
$N(\gamma_k)$.
\end{claim}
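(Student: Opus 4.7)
The plan is to combine the maximality of the compressing disk system $\{D_j\}$ with the standard classification of incompressible surfaces in a compression body.

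First, I would observe that $S'$ is an incompressible surface properly embedded in $W_{2i-1}^- := W_{2i-1} \setminus \mathring{N}(\gamma_k)$, with $\partial S'$ consisting of two circles on the annulus $A := \partial N(\gamma_k) \cap W_{2i-1}$, each parallel to $\gamma_k$. Incompressibility follows from maximality of $\{D_1,\ldots,D_n\}$ together with the strong irreducibility of $S$, which placed every compressing disk on the $W_{2i-1}$ side to begin with. Moreover, $W_{2i-1}^-$ is itself a compression body homeomorphic to $W_{2i-1}$: removing an open half-tube neighborhood of the curve $\gamma_k \subset \partial_+ W_{2i-1}$ merely replaces the annular neighborhood of $\gamma_k$ on $\partial_+$ by the annulus $A$, without altering the compression-body structure. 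In particular $\partial_- W_{2i-1}^- = \partial_- W_{2i-1}$ has no $S^2$ components, so $W_{2i-1}^-$ is irreducible.

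Next, I would invoke the standard classification of closed incompressible surfaces in an irreducible compression body: each closed component of $S'$ is either a $2$-sphere (bounding a ball) or parallel to a component of $\partial_- W_{2i-1}$. This handles two of the three cases in the claim.

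For the component(s) of $S'$ carrying $\partial S'$, I would cap off by attaching the subannulus $A' \subset A$ cobounded by the two boundary circles of $\partial S'$ (which must lie in the same component, since boundary-parallel curves are excluded from the compression system and so the disk-component case cannot arise) and push the result slightly into $W_{2i-1}$ to form a closed surface $\widehat\Sigma_0$. Using maximality of $\{D_j\}$ and irreducibility of $W_{2i-1}$, I would argue via standard cut-and-paste that $\widehat\Sigma_0$ is incompressible: any compression disk for $\widehat\Sigma_0$ can be isotoped to yield either a compression disk for $\Sigma_0$ in $W_{2i-1}^-$ (contradicting maximality) or, via an innermost-disk argument, a disk whose boundary lies entirely on $A'$ and can be surgered away. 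By the classification applied to $\widehat\Sigma_0$, it is either a $2$-sphere or parallel to a component of $\partial_- W_{2i-1}$; a case analysis on the cobounded region then shows that $\Sigma_0$ must be an annulus cobounding a solid-torus product region with $A' \subset \partial N(\gamma_k)$, i.e.\ parallel to $N(\gamma_k)$.

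The main obstacle lies in this last case analysis: verifying that the region cobounded by $\Sigma_0$ and $A'$ is a genuine product $\Sigma_0 \times I$ rather than a non-trivial solid torus, and ruling out (or reinterpreting) the subcase where $\widehat\Sigma_0$ is a torus parallel to a torus component of $\partial_- W_{2i-1}$. Both require a careful exploitation of maximality of $\{D_j\}$ and the specific placement of $N(\gamma_k)$ adjacent to the Heegaard surface $S$.
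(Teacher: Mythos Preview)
Your approach is fundamentally different from the paper's, and it cannot succeed because it is purely topological. The paper's proof of this claim uses the geometric hypothesis $\ell(\lambda_k) > 6(2g-3)$ from Theorem~\ref{thm:main}\eqref{i:surf-from-unfilled} in an essential way: assuming a bounded component $R$ of $S'$ is not a boundary--parallel annulus, one shows that $R$ (after puncturing along the remaining cores $\gamma_1,\ldots,\gamma_{k-1}$) yields an incompressible, boundary--incompressible surface $R^\circ$ in the cusped hyperbolic manifold $X$, and then the Agol--Lackenby bound $\ell(\bdy R^\circ) \leq 6\,\abs{\chi(R^\circ)}$ combines with the length hypotheses on $\lambda_k$ and the $s_j$ to force $(g-2)(b-1)<0$, a contradiction.

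That the longitude hypothesis is genuinely needed here is precisely the content of the Rieck--Sedgwick destabilization phenomenon: there exist fillings for which the core $\gamma$ is isotopic into a (strongly irreducible) Heegaard surface $\Sigma$ of $M$, yet $\Sigma$ is not isotopic to a Heegaard surface of $X$. In any such example the conclusion of the claim must fail, so no argument carried out entirely inside the compression body $W_{2i-1}$, using only maximality of $\{D_j\}$ and the classification of incompressible surfaces in compression bodies, can establish it.

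The specific place your sketch breaks down is the assertion that $\widehat\Sigma_0$ is incompressible. A compressing disk $E$ for $\widehat\Sigma_0$ may meet the capping annulus $A'$ in essential arcs; an outermost such arc on $E$ cuts off a \emph{boundary}-compression disk for $\Sigma_0$ along $A$, not a compression disk. Maximality of $\{D_j\}$ rules out compressions of $\Sigma_0$ but says nothing about boundary compressions, and you have no mechanism to exclude them. This is not a technicality to be patched; it is exactly the topological freedom that the geometric hypothesis is there to eliminate. (A smaller point: the two circles of $\bdy S'$ need not lie on the same component --- if $\gamma_k$ is separating on $S$ they may well lie on distinct components, so your capping construction also needs adjustment.)
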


\begin{proof}[Proof of claim]
First, suppose that a component of $S'$ is closed and not a
$2$--sphere. Since we have compressed
$S \setminus N(\gamma_k)$
along a maximal collection of disks, this component must be
incompressible in $W_{2i-1}$. But the only incompressible surfaces in
a compression body are parallel to the negative boundary $\bdy_-
W_{2i-1}$.

Next, suppose that a component of $S'$ is a surface with boundary, and
call this component $R$. Since
$\bdy (S \setminus N(\gamma_k))$
consists of two curves, both parallel to $\gamma_k$, $\bdy R$ must be
a union of one or two curves parallel to $\gamma_k$. Since the surface
$S$ has genus at most $g$, and we obtained $R$ by
cutting $S$ along $\gamma_k$ and then compressing, the genus of $R$ is at most
$g-1$.

Suppose, for a contradiction, that $R$ is an essential surface in $W
\setminus N(\gamma_k)$: that is, not an annulus parallel to
$N(\gamma_k)$. Then, since $\bdy W = F_{2i-2} \cup F_{2i}$ is a union
of incompressible surfaces, the component $R$ must also be essential
in $X(s_1, \ldots, s_{k-1}).$ Isotope the cores $\gamma_1, \ldots,
\gamma_{k-1}$ until they intersect $R$ minimally, and let $R^\circ = R
\setminus (\gamma_1 \cup \ldots \cup \gamma_{k-1})$.

We claim
that $R^\circ$ must be incompressible in the original cusped
hyperbolic manifold $X$.  To see this, suppose that an essential curve
$\alpha \subset R^\circ$ bounds a compression disk $D_0 \subset X$. By
passing to an innermost sub-disk if needed, we may assume that the interior of $D_0$
is disjoint from $R$. But $R$ is incompressible in $X(s_1, \ldots,
s_{k-1})$, hence $\alpha$ also bounds a disk $D_1 \subset R$. Since
$\alpha = \bdy D_1$ is an essential curve in $R^\circ$, the disk $D_1$
must be punctured two or more times by the $\gamma_i$. On the other
hand, $D_0$ is disjoint from all the cores $\gamma_i$, and has the
same boundary as $D_1$. Isotoping these cores through the ball
co-bounded by $D_0$ and $D_1$, past $D_1$, will reduce the
intersection number between $R$ and $\gamma_1 \cup \ldots \cup
\gamma_{k-1}$, contradicting the construction of $R^\circ$. Therefore,
$R^\circ$ is incompressible in $X$.

Recall that by construction, $R^\circ$ has a boundary component on
$N(\gamma_k)$.  Thus, if $R^\circ$ is a boundary-parallel annulus,
then so is $R$, a contradiction.  Thus $R^\circ$ is incompressible and
boundary--incompressible in $X$.

Now, remove the horospherical cusps $C_1, \ldots, C_k$ from both $X$
and $R^\circ$, and consider $\bdy R^\circ$, which is a union of $b$
closed curves on $\bdy C_1, \ldots, \bdy C_k$. The curves of $ \bdy
R$, which run parallel to $\gamma_k$, must be one or two longitudes of
the filling slope $s_k$.  Meanwhile, every other curve of $\bdy
R^\circ$ is a meridian of some $\gamma_j$ (for $1 \leq j \leq k-1$),
hence is in the isotopy class of the filling slope $s_j$.

By the hypotheses of Theorem \ref{thm:main}, the shortest longitude of
$s_k$ has length $\ell(\lambda_k) > 6(2g-3)$.  Each filling slope
$s_j$ for $1 \leq j \leq k-1$ also has length
$$\ell(s_j) \: > \: 2\pi(2g-1) \: > \: 6(2g-1)  \: > \: 6(2g-3).$$  

We conclude that the total length $\ell(\bdy R^\circ)$ of all the
curves of $\bdy R^\circ$ must satisfy
$$b \cdot 6(2g-3) \: < \: \ell(\bdy R^\circ) \: \leq \: 6 \abs{
  \chi(R^\circ) } \: \leq \: 6 \left( 2 (g-1) +b -2 \right) \: = \:
6(2g+b -4),$$ where the second inequality is a theorem of Agol
\cite[Theorem 5.1]{agol:bounds} and Lackenby \cite[Lemma
  3.3]{lackenby:word}. Comparing the first and last terms, we obtain
\begin{eqnarray*}
b \cdot (2g-3) & < & 2g + b - 4 \\
2gb - 2g  -4b + 4 &< & 0 \\
2(g - 2)(b-1) & < & 0 ,
\end{eqnarray*}
which is a contradiction since $g \geq 2$ and $b \geq 1$. This
contradiction proves the claim.
\end{proof}

Recall that we obtained $S'$ from $S$ by compressing along a maximal
collection of disks $\{D_1, \ldots, D_n\}$. By strong irreducibility
of $S$, all of these disks are contained in the same compression body
$W_{2i-1}$. Because all of these disks are disjoint from $\gamma_k$,
we may isotope $\gamma_k$ into $W_{2i-1}$ while staying disjoint from
$\{D_1, \ldots, D_n\}$.

We claim that $W_{2i-1} \setminus N(\gamma_k)$ is itself a compression
body. This can be seen by building the compression body ``downward''
from its positive boundary $S = \bdy_+ W_{2i-1}$. We thicken the
surface $S$ into $S \times [0,1] \subset W_{2i-1}$, and attach a
$2$--handle along each curve on $S \times \{ 0 \}$ corresponding to
$\bdy D_j$ for each $j$. After attaching the $2$--handles, the
resulting negative boundary is exactly the surface $S'$, with its two
boundary curves joined together. By Claim
\ref{claim:totally-compressible}, the surface obtained after attaching
$2$--handles consists of spheres, closed surfaces parallel to $\bdy_-
W_{2i-1}$, and a single torus isotopic to $\bdy N(\gamma_k)$. Thus,
after capping off each $2$--sphere with a $3$--ball, we obtain a
compression body $W'_{2i-1}$, satisfying
$$\bdy_+ W'_{2i-1} \: = \: \bdy_+ W_{2i-1} \: = \: S, \qquad \qquad \bdy_- W'_{2i-1} \: = \: \bdy_- W_{2i-1} \cup \bdy N(\gamma_k).$$

We have just shown that the core curve $\gamma_k$ may be isotoped off
the generalized splitting surface, into $W_{2i-1}$, in such a way that
the submanifolds
$$\{W_1, \ldots, W_{2i}, W'_{2i-1}, W_{2i+2}, \dots, W_m \}$$
form a thin generalized Heegaard splitting of $M \setminus \gamma_k =
X(s_1, \ldots, s_{k-1})$. After amalgamating this generalized
splitting, we obtain a Heegaard surface $\Sigma' \subset M \setminus
\gamma_k$.

Recall that, by \cite[Proposition 3.1]{lackenby:tunnel-alg},
amalgamation produces a unique Heegaard surface. Thus $\Sigma'$ is
isotopic in $M = X(s_1, \ldots, s_{k})$ to the surface $\Sigma$
obtained by amalgamating the splitting $\{W_1, \ldots, W_m \}$. In
other words, we have isotoped $\Sigma$ into $X(s_1, \ldots, s_{k-1})$,
in such a way that it is still a Heegaard surface. Repeating the above
argument for the core $\gamma_{k-2} \subset X(s_1, \ldots, s_{k-1})$,
and so on, completes the proof of Theorem \ref{thm:main}.
\end{proof}

\bibliographystyle{hamsplain}
\bibliography{biblio}

\end{document}